\documentclass[12pt,reqno,a4paper]{amsart}
\usepackage{
    amsmath,  amsfonts, amssymb,  amsthm,   amscd,
    gensymb,  graphicx, comment,  etoolbox, url,
    booktabs, stackrel, mathtools,enumitem, mathdots,  microtype, lmodern,    mathrsfs, graphicx, tikz,  longtable,tabularx, float, tikz, pst-node, tikz-cd, multirow, tabularx, amscd,  bm, array, makecell, diagbox, booktabs,ragged2e, caption, subcaption }
\usepackage{makecell,slashbox}

\usepackage{xcolor}
\usepackage[utf8]{inputenc}
\usepackage{microtype, fullpage, wrapfig,textcomp,mathrsfs,csquotes,fbb}
\usepackage[colorlinks=true, linkcolor=blue, citecolor=blue, urlcolor=blue, breaklinks=true]{hyperref}
\usepackage[capitalise]{cleveref}
\setlength{\marginparwidth}{2cm}
\usepackage{todonotes}
\usetikzlibrary{positioning}
\usetikzlibrary{shapes,arrows.meta,calc}

\usetikzlibrary{arrows}

\newtheorem{theorem}{Theorem}[section]

\newtheorem{conjecture}[theorem]{Conjecture}
\newtheorem{corollary}[theorem] {Corollary}
\newtheorem{definition}[theorem]{Definition}
\newtheorem{example}[theorem]{Example}
\newtheorem{lemma}[theorem]{Lemma}

\newtheorem{proposition}[theorem]{Proposition}
\newtheorem{remark}[theorem]{Remark}

\def\S{\mathbb{S}}

\newcommand\R{\mathbb{R}}
\newcommand\Z{\mathbb{Z}}
\newcommand{\cs}{\mathcal{S}}

\newcommand{\co}{\mathrm{coind}}
\newcommand{\ind}{\mathrm{ind}}
\newcommand{\hts}{\mathrm{ht}}
\def\mbar{\overline{M}}
\newcommand{\malpha}{\mathrm{M}_{\alpha}}
\newcommand{\mbalpha}{\overline{\mathrm{M}}_{\alpha}}
\newcommand{\colim}{\mathrm{colim}}
\newcolumntype{x}[1]{>{\centering\arraybackslash}p{#1}}

\hbadness=99999
\hfuzz=999pt

\begin{document}
\title{The Borsuk-Ulam theorem for planar polygon spaces}
\author{Navnath Daundkar}
\address{Indian Institute of Technology Bombay}
\email{navnathd@iitb.ac.in}
\author{Priyavrat Deshpande}
\address{Chennai Mathematical Institute}
\email{pdeshpande@cmi.ac.in}
\author{Shuchita Goyal}
\address{Indian Institute of Technology Delhi}
\email{shuckriya.goyal@gmail.com}
\author{Anurag Singh}
\address{Indian Institute of Technology Bhilai}
\email{anurags@iitbhilai.ac.in}

\thanks{The second author is partially supported by the MATRICS grant MTR/2017/000239 and  a grant from the Infosys Foundation. 
The third author is supported by DST, India.}

\begin{abstract}
The moduli space of planar polygons with generic side lengths is a closed, smooth manifold. 
Mapping a polygon to its reflected image across the $X$-axis defines a fixed-point-free involution on these moduli spaces, making them into free $\Z_2$-spaces. 
There are some important numerical parameters associated with free $\Z_2$-spaces, like index and coindex.
In this paper, we compute these parameters for some moduli spaces of polygons. 
We also determine for which of these spaces a generalized version of the Borsuk-Ulam theorem hold. 
Moreover, we obtain a formula for the Stiefel-Whitney height in terms of the genetic code, a combinatorial data associated with side lengths.
\end{abstract}
\keywords{free $\Z_2$-space, coindex, index, Stiefel-Whitney class, tidy spaces, planar polygon spaces}
\subjclass[2010]{55M30, 55P15, 57R42}
\maketitle

\section{Introduction}\label{sec:intro}

The Borsuk-Ulam (BU) theorem has been an object of central attraction in topology. 
It states that any continuous map from the $d$-sphere ${S}^d$ to the Euclidean space $\mathbb{R}^d$
must identify a pair of antipodal points.
Recently, the BU theorem has been studied for many different complexes with a free $\Z_2$-action. 
For instance, Musin \cite{BUmfds} considered PL-manifolds, Gonçalves et al. \cite{BUlowerdim} considered finite dimensional CW-complexes with a free cellular involution. 

For a topological space $X$ with a fixed-point-free involution $\nu$, we say that $(X,\nu, d)$ is a BU-triple if for every continuous map	$f : X \rightarrow \R^d$ there exists $x \in X$ such that $f(x) = f(\nu (x))$. 
The index of a free $\Z_2$-space $X$ is the smallest number $n$ such that there is a $\Z_2$-equivariant map from $X$ to $S^n$.
It is known that 
if the index of $X$ is $d$ then $(X, \mu,d)$ is always a BU-triple and so is $(X, \mu,t)$ for any $t\leq d$. 
One way of determining BU-triples is to compute a index and other related numerical quantities.
The main focus of the paper is to try and calculate the exact value of the index for a class of spaces. 
In some cases where the exact value is not possible to calculate we provide bounds in terms of other related quantities which we now define. 

In \cite{BUlowerdim}, the authors used the index and the Stiefel-Whitney height (see \Cref{coind}) to find BU-triples. 
There is also the dual notion of index of a free $\Z_2$-space, the coindex.
A $\Z_2$-space is called {\itshape tidy} if all these three numerical parameters coincide; otherwise it is called {\itshape non-tidy}. 
Many authors have considered the problem of finding free $\Z_2$-spaces that are tidy. 
For more discussion about the index, coindex and tidy spaces, the reader is referred to the book of Matou\v{s}ek \cite[Section 5.3]{UBUthm} and Csorba's thesis \cite{csorba}.

In this article, we investigate the tidyness and existence of BU-triples among the class of moduli spaces of planar polygons with a free cellular involution. 
The \emph{moduli space of planar polygons} associated with a length vector $\alpha=(\alpha_{1},\dots, \alpha_{n})$, denoted by $\mathrm{M}_{\alpha}$, is the collection of all closed piecewise linear paths in the plane up to orientation preserving isometries with side lengths $\alpha_{1}, \alpha_{2},\dots, \alpha_{n}$. Equivalently, we can define $\mathrm{M}_{\alpha}$ as
\[\mathrm{M}_{\alpha}= \{(v_{1},v_{2},\dots,v_{n})\in (S^{1})^{n} : \sum_{i=1}^{n}\alpha_{i}v_{i} = 0 \}/\mathrm{SO}_{2},\]
where $S^{1}$ is the unit circle and the group of orientation preserving isometries $\mathrm{SO}_{2}$ acts diagonally. 
The moduli space of planar polygons (associated with $\alpha$) viewed upto isometries is defined as \[\overline{\mathrm{M}}_{\alpha}= \{(v_{1},v_{2},\dots,v_{n})\in (S^{1})^{n} : \displaystyle\sum_{i=1}^{n}\alpha_{i}v_{i} = 0 \}/\mathrm{O}_{2},\]
where the group of all isometries of the plane $\mathrm{O}_{2}$ acts diagonally.
If we choose a length vector $\alpha$ such that $\sum_{i=1}^{n}\pm \alpha_{i} \neq 0$ then the moduli spaces $\mathrm{M}_{\alpha}$ and $\overline{\mathrm{M}}_{\alpha}$ are closed, smooth manifolds of dimension $n-3$. 
Such length vectors are called \emph{generic}.
In this paper the length vectors are assumed to be generic unless stated otherwise.

Moduli spaces of planar polygons have been studied extensively. 
See, for example, Farber's book \cite{zbMATH05315240}, which contains many results that express topological invariants of the moduli space in terms of the combinatorial data associated with the length vector.
Hausmann and Knutson \cite{cohomologyring} computed the mod $2$ cohomology ring of $\mbalpha$. 
Using this presentation of the cohomology ring, Kamiyama \cite{KamiyamaSWheight} computed the height of the Stiefel-Whitney class for (generic) $\alpha=(1,\dots,1,r)$, where $r$ and $n$ have the same parity. 
Panina \cite{GP} showed that the orientation preserving moduli spaces admit a CW-structure with free $\Z_2$-action. 
It is therefore natural to look for BU triples among these spaces and also identify which one of these are tidy. 
The aim of this article is to achieve these two goals. 

The paper is organized as follows. 
In \Cref{sec:prelim} we collect relevant results about $\Z_2$-spaces and polygon spaces. 
\cref{sec3} contains results about those moduli spaces for which the genetic code of the length vector contains exactly one gene. 
Then in \cref{swheight} we consider the two genes case and mainly focus on deriving an expression for the Stiefel-Whitney height. 
The formula we obtain generalizes a result of Don Davis. 
Finally, in \cref{qepolys} we tackle the class of quasi-equilateral polygons. 

\section{Preliminaries}\label{sec:prelim}

Let $X$ be a topological space with a free $\Z_2$-action and consider the $n$-sphere, $S^n_a$, with the antipodal action.	
Then we have the following numerical data associated with $\Z_2$-spaces.

\begin{definition}\label{coind}
\emph{The \emph{coindex} of $X$ is
\[\mathrm{coind}(X) := \mathrm{max} \{n\geq 0 : \exists~  \textit{$\Z_2$-map $S^n_a\to X$}\}. \]
The \emph{index} of $X$ is
\[\mathrm{ind}(X) := \mathrm{min} \{n\geq 0 : \exists~  \textit{$\Z_2$-map $X \to S^n_a$}\}. \]
The Stiefel-Whitney \emph{height} of $X$ is
\[\mathrm{ht}(X) := \mathrm{sup} \{n\geq 0 : (w_1(X))^n\neq 0 \}, \]
where $w_1(X)$ is the first Stiefel-Whitney class of the double cover $X\to X/\mathbb{Z}_2$.}
\end{definition}
For a free $\Z_2$-space $X$, the following inequality relates these three parameters:
\begin{equation}\label{eq: coind ht ind}
0 \le \mathrm{coind}(X) \le \mathrm{ht}(X) \le \mathrm{ind}(X) \le \mathrm{dim}(X).
\end{equation}
Below is an equivalent formulation of BU-triples in terms of above defined numerical parameters. 
The proof of this equivalence can be found in \cite{BUlowerdim}. 

\begin{proposition}[{\cite[Theorem 3.4]{BUlowerdim}}]\label{butht}
Let $X$ be an $m$-manifold with a free $\Z_2$ action $\tau$.
Then $(X, \tau, m)$ is a BU-triple if and only if $\ind(X) = \mathrm{ht}(X) = m$.
\end{proposition}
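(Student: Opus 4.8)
The plan is to first reduce the Borsuk--Ulam condition to a statement purely about the index, and then to upgrade the inequality $\mathrm{ht}(X)\le\ind(X)$ of \eqref{eq: coind ht ind} to an equality using the closed manifold structure. First I would dispose of the BU-triple condition. Given $f\colon X\to\R^m$, set $g(x)=f(x)-f(\tau(x))$; then $g(\tau(x))=-g(x)$, so $g$ is $\Z_2$-equivariant for the antipodal action on $\R^m$, and $f(x)=f(\tau(x))$ for some $x$ precisely when $g$ has a zero. Thus $(X,\tau,m)$ fails to be a BU-triple iff some such $g$ is nowhere zero, and radially projecting a nonvanishing $g$ gives a $\Z_2$-map $X\to S^{m-1}_a$; conversely any $\Z_2$-map $X\to S^{m-1}_a\subset\R^m$ is a nowhere-zero equivariant map. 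Hence $(X,\tau,m)$ is a BU-triple iff $\ind(X)\ge m$, and since $\ind(X)\le\dim(X)=m$ by \eqref{eq: coind ht ind}, this is equivalent to $\ind(X)=m$.

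In view of this, and of $\mathrm{ht}(X)\le\ind(X)$, the proposition reduces to the single implication that for a closed $m$-manifold $\ind(X)=m$ forces $\mathrm{ht}(X)=m$; the reverse is immediate, since $\mathrm{ht}(X)=m$ already yields $m\le\ind(X)\le m$. I would prove the contrapositive $w_1^m=0\Rightarrow\ind(X)\le m-1$ via the standard translation of the index into a compression problem. Writing $N=X/\Z_2$ and letting $c\colon N\to\R P^\infty$ classify the double cover $X\to N$, one has $w_1=c^*a$ for the generator $a\in H^1(\R P^\infty;\Z_2)$, and $\Z_2$-maps $X\to S^n_a$ correspond (up to homotopy) to compressions of $c$ into $\R P^n$; thus $\ind(X)=\min\{n: c \text{ compresses into } \R P^n\}$. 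Since $\dim N=m$, cellular approximation already gives $c\colon N\to\R P^m$, and it remains to push $c$ off the top cell.

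The heart of the argument, and the step where the closed manifold hypothesis is essential, is to identify the obstruction to this last compression with $w_1^m$. Making $c$ smooth and transverse to an interior point $p$ of the top cell, the set $c^{-1}(p)$ is finite and its parity equals $\langle c^*a^m,[N]\rangle=\langle w_1^m,[N]\rangle$, where $[N]$ is the $\Z_2$-fundamental class and $a^m$ generates $H^m(\R P^m;\Z_2)$. If $w_1^m=0$ then, because $N$ is a connected closed $m$-manifold and hence $\langle\,\cdot\,,[N]\rangle\colon H^m(N;\Z_2)\to\Z_2$ is an isomorphism, this parity vanishes; pairs of preimage points can then be cancelled by elementary homotopies along arcs in $N$ (the mod-$2$ count being the only obstruction, orientations play no role), so $c$ is homotopic to a map missing $p$, that is, into $\R P^m\setminus\{p\}\simeq\R P^{m-1}$. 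Therefore $\ind(X)\le m-1$, which is the desired contrapositive.

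I expect the main obstacle to be exactly this sufficiency. The inequality $\mathrm{ht}(X)\le\ind(X)$ holds for every free $\Z_2$-complex and may be strict, so the real content is that $w_1^m$ is the \emph{complete} top-dimensional obstruction to lowering the index. This is where Poincar\'e duality (so that $w_1^m$ detects the mod-$2$ degree) and the manifold structure (permitting the geometric cancellation of preimage points) enter decisively; for general spaces neither is available and the equality can fail. A minor point to settle separately is connectedness: one reduces to $N$ connected, which holds for the polygon spaces under consideration, or runs the parity and compression arguments componentwise.
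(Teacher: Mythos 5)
Your opening reduction is fine: $(X,\tau,m)$ is a BU-triple iff there is no $\Z_2$-map $X\to S^{m-1}_a$, i.e.\ iff $\ind(X)=m$, and since $\mathrm{ht}(X)\le \ind(X)\le m$ always, the proposition amounts to the single implication $w_1^m=0\Rightarrow \ind(X)\le m-1$. (The paper itself does not prove this proposition; it cites it from Gon\c{c}alves et al.) The genuine gap is at the step you yourself single out as the heart: the assertion that for a map $c\colon N\to\mathbb{R}P^m$ from a closed connected $m$-manifold, evenness of the count of $c^{-1}(p)$ lets one cancel preimage points in pairs along arcs, ``the mod-$2$ count being the only obstruction, orientations play no role.'' That principle is false. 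The double covering $q\colon S^m\to\mathbb{R}P^m$ ($m\ge 3$) has exactly two preimages over every point, hence mod-$2$ degree zero, yet it is not homotopic to a map missing a point: such a homotopy would factor $q_*\colon\pi_m(S^m)\to\pi_m(\mathbb{R}P^m)$, an isomorphism onto $\Z$, through $\pi_m(\mathbb{R}P^{m-1})\to\pi_m(\mathbb{R}P^m)$, which is the zero map (on universal covers the inclusion $\mathbb{R}P^{m-1}\subset\mathbb{R}P^m$ lifts to the null-homotopic equatorial inclusion $S^{m-1}\subset S^m$). Cancelling a pair $x_1,x_2\in c^{-1}(p)$ requires an arc $\gamma$ along which $c\circ\gamma$ is null-homotopic \emph{and} the two local degrees are opposite after transporting orientations along $\gamma$; whether a sign can be flipped by rechoosing $\gamma$ is governed by the interplay of $w_1$ with $w_1(TN)$, so in the non-flippable cases the obstruction attached to your fixed representative is an integer-valued twisted degree, not a parity. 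Orientations do matter, and your argument nowhere uses that $c$ classifies the given cover, which is exactly the extra structure a correct proof must exploit.

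There is a second, related conflation that blocks any easy repair: $\ind(X)\le m-1$ asks for \emph{some} map $N\to\mathbb{R}P^{m-1}$ classifying the cover, i.e.\ a compression of $c$ up to homotopy in $\mathbb{R}P^\infty$, whereas your transversality-and-cancellation moves are homotopies inside $\mathbb{R}P^m$; two maps into $\mathbb{R}P^m$ can be homotopic in $\mathbb{R}P^\infty$ without being homotopic in $\mathbb{R}P^m$ (the example above versus a constant map on $S^m$), and in the non-flippable cases one genuinely must sweep across higher cells of $\mathbb{R}P^\infty$ to normalize the integer degree before any cancellation can begin. The standard way to close the gap is to push your own translation one step further: $\Z_2$-maps $X\to S^{m-1}$ are exactly nowhere-zero sections of the bundle $X\times_{\Z_2}\R^m\to N$, i.e.\ of $m\gamma$ with $\gamma$ the line bundle of the cover; over the $m$-complex $N$ the unique obstruction is the twisted Euler class $e(m\gamma)\in H^m(N;\mathcal{Z})$, where $\mathcal{Z}$ is $\Z$ twisted by $w_1(m\gamma)=m\,w_1$. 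Poincar\'e duality for the closed manifold $N$ shows $H^m(N;\mathcal{Z})\cong\Z_2$ with injective mod-$2$ reduction when $m\,w_1\ne w_1(TN)$ (so $e=0$ iff $w_1^m=0$), while when $m\,w_1=w_1(TN)$ the group is $\Z$ and $e$, being pulled back from the torsion group $H^m(\mathbb{R}P^\infty;\mathcal{Z})\cong\Z_2$, vanishes automatically, as then does $w_1^m$. Your parity count computes only the mod-$2$ reduction of this obstruction; the entire content of the proposition is that the reduction loses nothing, and that is precisely what your cancellation claim assumes rather than proves.
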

\begin{corollary}\label{bueqcor}
Let $X$ be an $m$-manifold with a free $\Z_2$ action $\tau$.
If $\hts(X) = m-1$ then $\ind(X)$ is also $m-1$ and $(X, \tau, m-1)$ is a BU-triple. Moreover, if $\mathrm{ht}(X) < m$, then $\mathrm{ind}(X) < m$.
\end{corollary}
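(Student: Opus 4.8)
The plan is to deduce both assertions from three ingredients already available: the general inequality \eqref{eq: coind ht ind}, \Cref{butht}, and the elementary fact recalled in the introduction that $\mathrm{ind}(X) = d$ forces $(X,\tau,d)$ to be a BU-triple (and likewise $(X,\tau,t)$ for every $t \le d$). I would establish the ``moreover'' clause first, since the first assertion then follows from it almost immediately.

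For the ``moreover'' clause, suppose $\mathrm{ht}(X) < m$. Because $X$ is an $m$-manifold, \eqref{eq: coind ht ind} gives $\mathrm{ind}(X) \le \dim(X) = m$, so it suffices to exclude the extremal value $\mathrm{ind}(X) = m$. I would argue by contradiction: if $\mathrm{ind}(X) = m$, then the quoted fact shows $(X,\tau,m)$ is a BU-triple, and \Cref{butht} then forces $\mathrm{ht}(X) = m$, contradicting $\mathrm{ht}(X) < m$. Hence $\mathrm{ind}(X) < m$.

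For the first assertion, assume $\mathrm{ht}(X) = m-1$. In particular $\mathrm{ht}(X) < m$, so the clause just proved yields $\mathrm{ind}(X) \le m-1$; combining this with the lower bound $\mathrm{ind}(X) \ge \mathrm{ht}(X) = m-1$ supplied by \eqref{eq: coind ht ind} pins down $\mathrm{ind}(X) = m-1$. Applying the quoted fact with $d = m-1$ then shows $(X,\tau,m-1)$ is a BU-triple, which completes the argument.

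Since every step is a direct invocation of an already-established result, I do not anticipate a genuine obstacle. The only point requiring care is the logical direction: one should use \Cref{butht} in contrapositive form to rule out $\mathrm{ind}(X) = m$, rather than attempting to construct an explicit $\Z_2$-map $X \to S^{m-1}_a$ to prove $\mathrm{ind}(X) = m-1$ by hand.
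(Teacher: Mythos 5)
Your proof is correct and is essentially the intended derivation: the paper states \Cref{bueqcor} without proof as an immediate consequence of \Cref{butht}, the chain \eqref{eq: coind ht ind}, and the fact from the introduction that $\ind(X)=d$ makes $(X,\tau,t)$ a BU-triple for all $t\le d$, which is exactly the combination you use. Your ordering (proving the ``moreover'' clause first via the contrapositive of \Cref{butht}, then pinning down $\ind(X)=m-1$ by sandwiching) is the natural way to fill in the omitted argument.
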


We now turn to spaces that are of central interest in the present article. 
As introduced in \Cref{sec:intro}, let $\alpha$ be a generic $n$-length vector.
Let $\mathrm{M}_{\alpha}$ be the corresponding moduli space of planar polygons whose side lengths are specified by $\alpha$.
Observe that $\mathrm{M}_{\alpha}$ admits an involution $\tau$ defined by
\begin{equation}\label{invo}
    \tau(v_{1},v_{2},\dots,v_{n})=(\bar{v}_{1},\bar{v}_{2},\dots,\bar{v}_{n}),
\end{equation}
where $\bar{v}_{i}=(x_{i},-y_{i})$ and $v_{i}=(x_{i},y_{i})$. 
This involution sends a polygon to its reflected image across the $X$-axis. 
Since $\alpha$ is generic, $\tau$ is a fixed point free involution.

Since the diffeomorphism type of a moduli space does not depend on the ordering of the side lengths of polygons, we assume that  $\alpha=(\alpha_{1},\alpha_{2},\dots,\alpha_{n})$ satisfies $\alpha_{1}\leq \alpha_{2} \leq \dots\leq \alpha_{n}$. 

\begin{definition}
\emph{Given a length vector $\alpha$, a subset $I\subset [n]$ is called $\alpha$-\emph{short}  if 
\[\sum_{i\in I} \alpha_i  < \sum_{j \not \in I} \alpha_j\]  and \emph{long} otherwise.}

\end{definition}	


\begin{definition}
\emph{For a length vector $\alpha$, consider the collection of subsets of $[n]$:
\[ S_{n}(\alpha) = \{J\subset [n] : \text{ $n\in J$ and $J$ is short}\}, \]
and a partial order $\leq$ on $S_{n}(\alpha)$ by $I\leq J$ if $I=\{i_{1},\dots,i_{t}\}$ and $\{j_{1},\dots,j_{t}\}\subseteq J$ with $i_{s}\leq j_{s}$ for $1\leq s\leq t$. 
The \emph{genetic code} of $\alpha$ is the set of maximal elements of $S_{n}(\alpha)$ with respect to this partial order. 
If $A_{1}, A_{2},\dots, A_{k}$  are the maximal elements of $S_{n}(\alpha)$ with respect to $\leq$ then the genetic code of $\alpha$ is denoted by $\langle A_{1},\dots,A_{k} \rangle$.}
\end{definition}
	
\begin{remark}
Let the $n$-tuple $\alpha=(1,\dots,1,n-2)$ be a length vector. 
Then the genetic code of $\alpha$ is $\langle n \rangle$. 
It follows from \cite[Example 6.5]{HR1} that $\mathrm{M}_{\alpha}\cong S^{n-3}$ and $\overline{\mathrm{M}}_{\alpha}\cong \mathbb{R}P^{n-3}$.
\end{remark}
	
If $S$ is a gene, then $S \setminus \{n\}$ is a {\it gee}.
We set $[n-1]=\{1,2,\dots,n-1\}$.
An $S \subset [n-1]$ is a {\it subgee} if $S \le T$ for some gee $T$.
Many topological invariants of these moduli spaces have been expressed in terms of the genetic code. 
We state here a result describing the cohomological algebra of $H^*(\mbar_{\alpha};\Z_2)$ in terms of the genetic information.

\begin{theorem} [{\cite[Corollary 9.2]{cohomologyring}}] \label{hausmann-knutson}
The cohomology ring $H^{\ast}(\mbar_{\alpha}; \Z_2)$ is generated by classes $R, V_1,$ $ V_2, \dots, V_n \in H^1(\mbar_{\alpha}; \Z_2)$ subject to the following relations:
\begin{enumerate}
\item[(R1)] $R V_i$ + $V_i^2$, for $i \in [n]$.
\item[(R2)] $V_S := \displaystyle\prod_{i \in S}V_i$, unless $S$ is a subgee.
\item[(R3)] For every subgee $S$ with $|S| \ge n-d-2$,
\[\displaystyle\sum_{T\cap S=\emptyset} R^{d - |T|}V_T,\] where $T$ is a subgee. 
\end{enumerate}
\end{theorem}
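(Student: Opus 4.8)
The plan is to realize $\mbar_{\alpha}$ as a quotient that admits a Kirwan-type surjection from a transparent equivariant cohomology ring, to read off the generators and the three families of relations from that description, and then to force completeness by matching Betti numbers computed independently via Morse theory. Concretely, writing $e_i = \alpha_i v_i \in \R^2 \cong \mathbb{C}$, the locus $\{(e_1,\dots,e_n) : |e_i| = \alpha_i,\ \sum_i e_i = 0\}$ carries a residual diagonal $\mathrm{O}_2$-action whose quotient is $\mbar_{\alpha}$. The double cover $\malpha \to \mbar_{\alpha}$ determined by the reflection $\tau$ supplies the class $R := w_1$ of its associated real line bundle, and for each index $i$ there is a natural real line bundle $\mathcal{L}_i$ over $\mbar_{\alpha}$ recording the unoriented direction of the $i$-th edge, from which I set $V_i := w_1(\mathcal{L}_i)$; both classes lie in $H^1(\mbar_{\alpha};\Z_2)$. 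That these generate the ring I would obtain from mod-$2$ Kirwan surjectivity out of the $\Z_2$-equivariant cohomology of the level set, where the analogous classes visibly generate.

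Next I would account for the three relations. Relation (R1), $RV_i + V_i^2 = 0$, encodes the interaction of the circle coordinate $v_i$ with the double cover: the section cutting out the locus where the $i$-th edge is parallel to a fixed reference direction yields the self-intersection identity $w_1(\mathcal{L}_i)^2 = R\, w_1(\mathcal{L}_i)$. Relation (R2) is Stanley--Reisner in nature: if $S \subset [n]$ is not a subgee, then no closed polygon can have all the edges indexed by $S$ simultaneously aligned, so the corresponding stratum is empty and $V_S = \prod_{i \in S} V_i$ must vanish; translating the emptiness of a stratum into the failure of $S$ to be short (equivalently, to be a subgee) is where the genetic-code combinatorics enters. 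Relation (R3) is the linear, moment-map relation forced by the closing condition $\sum_i \alpha_i v_i = 0$: upon passing to cohomology this condition imposes the vanishing of the sums $\sum_{T \cap S = \emptyset} R^{d-|T|} V_T$ over subgees $T$, for each subgee $S$ with $|S| \ge n-d-2$, exactly as linear relations arise in the cohomology of toric quotients.

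The hardest step, and the real content of the theorem, is completeness: proving that (R1)--(R3) generate the entire relation ideal, so that the presented ring $A$ maps onto $H^*(\mbar_{\alpha};\Z_2)$ with no kernel. For this I would compute the graded dimension of $A$ purely combinatorially from the genetic code, by exhibiting an explicit monomial basis surviving (R2) and (R3), and compare it against the Betti numbers of $\mbar_{\alpha}$. The latter I would obtain independently from a Morse function of bending type that is perfect over $\Z_2$, whose critical points are indexed by short subsets containing $n$ and graded by cardinality; matching the two counts degree by degree forces the surjection $A \twoheadrightarrow H^*(\mbar_{\alpha};\Z_2)$ to be an isomorphism. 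I expect the principal obstacle to lie entirely in this bookkeeping---verifying that the monomials not killed by (R2) and reduced by (R3) form a basis of exactly the right size in each degree---which seems to require an induction on the genetic data rather than any single clean identity.
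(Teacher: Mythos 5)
First, a framing point: the paper does not prove \Cref{hausmann-knutson} at all --- it is imported verbatim from Hausmann--Knutson \cite[Corollary 9.2]{cohomologyring}. So your proposal can only be compared with their argument, and against that benchmark it has a concrete error plus two unproven load-bearing inputs. The error is in your justification of (R2). You claim $V_S=\prod_{i\in S}V_i$ vanishes because, when $S$ is not a subgee, ``no closed polygon can have all the edges indexed by $S$ simultaneously aligned.'' That equivalence is false: being a subgee means $S\cup\{n\}$ is $\alpha$-short, and the stratum that detects this is the one where the edges of $S$ are aligned \emph{with the distinguished edge $n$}, not merely with each other. Concretely, take $\alpha=(1,1,1,1,3)$, whose genetic code is $\langle\{5\}\rangle$, so the only subgee is $\emptyset$. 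The set $S=\{1,2\}$ is not a subgee, yet closed polygons with $v_1=v_2$ exist in abundance (point edges $1$ and $2$ east; the remaining sides of lengths $1,1,3$ can close a gap of length $2$). The stratum you need to be empty is nonempty, so your Stanley--Reisner argument proves the wrong relation. The correct dictionary --- the locus $\{v_i=v_n\ \text{for all}\ i\in S\}$ is nonempty iff $S\cup\{n\}$ is short iff $S$ is a subgee --- is asymmetric in the last edge, and that asymmetry is exactly how the genetic code (maximal short sets \emph{containing} $n$) enters the presentation.

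Second, the two pillars of your completeness strategy are not available off the shelf, and Hausmann--Knutson's proof is structured precisely to avoid needing them. You require (i) mod-$2$ Kirwan surjectivity for the $\mathrm{O}_2$-quotient defining $\mbalpha$, and (ii) a bending-type Morse function on $\mbalpha$ that is perfect over $\Z_2$ with critical points indexed by short subsets. Neither is a standard tool for real (non-symplectic) quotients, and you give no argument for either. What Hausmann--Knutson actually do is compute the cohomology ring of the \emph{spatial} polygon space --- a genuine symplectic reduction, where Kirwan surjectivity and the kernel computation are available --- and then transfer the answer to the planar space via a degree-halving isomorphism $H^{2k}(\text{spatial};\Z_2)\cong H^{k}(\mbalpha;\Z_2)$, reflecting the fact that $\mbalpha$ is the fixed locus of complex conjugation on the spatial space (the prototype of what was later axiomatized as a conjugation space). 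Both generation and the completeness of (R1)--(R3) then follow from that isomorphism together with an explicit monomial basis, rather than from a count against Morse-theoretic Betti numbers. Your plan is not unsalvageable --- real-locus degree-halving is ultimately what makes a ``work directly on $\mbalpha$'' approach legitimate --- but as written, the proof of (R2) is wrong and the completeness step rests on two unestablished theorems.
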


\section{Length vectors with monogenic codes}\label{sec3}

A genetic code with only one gene is called {\it monogenic}.
This section deals with computations of coindex, index and height of certain planar polygon spaces having monogenic code. 

\begin{proposition}\label{prop:M(l) is sphere}
Let $\langle \{n\} \rangle$ be the genetic code of $n$-length vector $\alpha$. Then $\malpha$ is tidy.
\end{proposition}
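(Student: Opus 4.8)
The plan is to prove that, as a free $\Z_2$-space, $(\malpha,\tau)$ is isomorphic to the antipodal sphere $S^{n-3}_a$, so that the three parameters in \eqref{eq: coind ht ind} all collapse to the common value $n-3=\dim\malpha$. First I would record the underlying non-equivariant topology. The remark preceding \Cref{hausmann-knutson} identifies $\malpha\cong S^{n-3}$ and $\mbalpha\cong \mathbb{R}P^{n-3}$ for the length vector $(1,\dots,1,n-2)$, whose genetic code is $\langle\{n\}\rangle$; since the diffeomorphism type of a polygon space is determined by its genetic code (see \cite{HR1}), the same holds for every generic $\alpha$ with code $\langle\{n\}\rangle$. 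Because $\tau$ is reflection and $\mathrm{O}_2$ is generated by $\mathrm{SO}_2$ together with a reflection, we have $\mbalpha=\malpha/\tau$, so $\tau$ exhibits $\malpha\to\mbalpha$ as a free double cover with total space $S^{n-3}$ and base $\mathbb{R}P^{n-3}$.

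Next I would pin down the $\Z_2$-structure by classifying this cover. For $n\ge 4$ the sphere $\malpha\cong S^{n-3}$ is connected, hence so is the double cover $\malpha\to\mbalpha$; as $H^1(\mathbb{R}P^{n-3};\Z_2)\cong\Z_2$ has a unique nonzero element, there is exactly one connected double cover of $\mathbb{R}P^{n-3}$, namely the antipodal cover $S^{n-3}_a\to\mathbb{R}P^{n-3}$. Two double covers of the same base classified by the same class in $H^1(-;\Z_2)$ are isomorphic as principal $\Z_2$-bundles, so there is a $\Z_2$-equivariant homeomorphism $\malpha\cong S^{n-3}_a$ covering the identity of $\mathbb{R}P^{n-3}$. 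Equivalently, $w_1(\malpha)\in H^1(\mbalpha;\Z_2)$ is the nonzero generator, which one can also read directly from \Cref{hausmann-knutson}: the code $\langle\{n\}\rangle$ has $\emptyset$ as its only subgee, so (R2) kills every nonempty $V_S$, leaving $H^{\ast}(\mbalpha;\Z_2)\cong\Z_2[R]/(R^{n-2})$ with $R=w_1(\malpha)$.

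Finally I would transport invariants along this $\Z_2$-homeomorphism. Since $(S^{n-3}_a,\text{antipodal})$ has $\mathrm{coind}=\hts=\ind=n-3$, and all three are invariants of the free $\Z_2$-homeomorphism type, the same values hold for $\malpha$, which is therefore tidy. Concretely, the presentation above gives $\hts(\malpha)=n-3$; combined with \eqref{eq: coind ht ind} and $\dim\malpha=n-3$ this forces $\ind(\malpha)=n-3$, and the homeomorphism $S^{n-3}_a\to\malpha$ is itself a $\Z_2$-map realizing $\mathrm{coind}(\malpha)=n-3$. The one step demanding care is the classification of the cover, i.e. verifying $w_1\neq 0$ and hence that the cover is the \emph{nontrivial} one rather than the split cover $\mathbb{R}P^{n-3}\sqcup\mathbb{R}P^{n-3}$; connectedness of the positive-dimensional sphere $\malpha$ is precisely what secures this and identifies $(\malpha,\tau)$ with the antipodal sphere.
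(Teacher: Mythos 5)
Your proof is correct, and at its core it runs along the same route as the paper's: identify $(\malpha,\tau)$ with the antipodal sphere $S^{n-3}_a$ and let the chain \Cref{eq: coind ht ind} together with $\ind(\malpha)\le\dim\malpha=n-3$ collapse all three invariants to $n-3$. The difference lies in how that identification is justified. The paper's proof asserts that ``the identity map on $S^{n-3}$'' gives $\co(\malpha)\ge n-3$, which tacitly assumes that the well-known homeomorphism $\malpha\cong S^{n-3}$ intertwines $\tau$ with the antipodal involution; you supply exactly the missing argument: $\malpha\to\mbalpha\cong\mathbb{R}P^{n-3}$ is a double cover with connected total space, $\mathbb{R}P^{n-3}$ has a unique connected double cover because $H^1(\mathbb{R}P^{n-3};\Z_2)\cong\Z_2$ has a single nonzero class, and isomorphic principal $\Z_2$-bundles have $\Z_2$-homeomorphic total spaces, so $(\malpha,\tau)\cong S^{n-3}_a$ equivariantly. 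Your cohomological cross-check --- that for the code $\langle\{n\}\rangle$ relations (R2) and (R3) of \Cref{hausmann-knutson} force $H^{\ast}(\mbalpha;\Z_2)\cong\Z_2[R]/(R^{n-2})$, whence $\hts(\malpha)=n-3$ and so $\ind(\malpha)=n-3$ --- is a genuinely independent second route to two of the three equalities, one that bypasses the equivariant identification entirely (only the coindex still needs the covering-space step); it also makes the paper's appeal to Borsuk--Ulam visibly unnecessary, since $\ind\le\dim$ already caps the chain. The one caveat is the degenerate case $n=3$: there $\malpha\cong S^0$ is disconnected, so your connectedness argument does not literally apply, but a free involution on $S^0$ is automatically the antipodal swap, so the conclusion is immediate; you should state this case separately (or restrict to $n\ge 4$) rather than leave it implicit.
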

\begin{proof}
For a length vector with genetic code $\langle \{n\} \rangle$, it is well known that $\malpha$ is homeomorphic to $S^{n-3}$. The identity map on $S^{n-3}$ implies that the coindex of $\malpha$ is at least $n-3$. By Borsuk-Ulam theorem, there does not exist a $\Z_2$-equivariant map from $S^{n-2} \to S^{n-3}$. Therefore, $\mathrm{coind}(\malpha) = n-3 \le \mathrm{ind}(\malpha) \le \mathrm{dim}(\malpha ) = n-3$. Hence $\malpha$ is tidy.
\end{proof}

\begin{lemma}\label{lem:lower bound on coind}
If the size of the smallest gee in the genetic code corresponding to an $n$-length vector $\alpha$ is $l$, then $ \emph{coind}(\malpha) \ge n-3-l$.
\end{lemma}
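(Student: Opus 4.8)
The plan is to realize the bound by exhibiting a $\Z_2$-equivariant map from the antipodal sphere $S^{n-3-l}_a$ into $\malpha$ and then invoke monotonicity of the coindex: any $\Z_2$-map $Y \to \malpha$ forces $\co(\malpha) \ge \co(Y)$, since a $\Z_2$-map $S^k_a \to Y$ composes with it. Thus it suffices to produce a $\Z_2$-map from a space of coindex $n-3-l$, and the natural candidate is a genuine $(n-3-l)$-sphere arising as a smaller polygon space, together with a natural map into $\malpha$.

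Let $S$ be the smallest gene, so $n \in S$, the gee $S \setminus \{n\}$ has size $l$, and $|S| = l+1$. Write $T = [n]\setminus S$ and $L = \sum_{i\in S}\alpha_i$. I would first form the merged length vector $\alpha'$ of length $n-l$ whose entries are $\{\alpha_j : j \in T\}$ together with a single new edge of length $L$. A signed sum for $\alpha'$ is a signed sum for $\alpha$ in which all indices of $S$ carry a common sign, so genericity of $\alpha$ yields genericity of $\alpha'$. The crucial step is to identify $M_{\alpha'}$: since $L \ge \alpha_n$, the new edge is the longest in $\alpha'$; its singleton is short precisely because $S$ is short; and for every $j \in T$ the pair consisting of the new edge and $j$ is long precisely because $S\cup\{j\}$ is long, which holds because $S$ is an inclusion-maximal short set containing $n$ (a gene). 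Hence $\alpha'$ has the monogenic code $\langle \{n-l\}\rangle$, and by the same well-known fact used in \Cref{prop:M(l) is sphere} one gets $M_{\alpha'} \cong S^{(n-l)-3} = S^{n-3-l}$, whose coindex is $n-3-l$.

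It then remains to build a $\Z_2$-map $\Phi : M_{\alpha'} \to \malpha$. Geometrically $\Phi$ un-merges: given an $\alpha'$-configuration, keep the directions of the $T$-edges and replace the long edge, say pointing in direction $w$, by the chain of edges $(\alpha_i)_{i\in S}$ all pointing in the common direction $w$. The total displacement is unchanged, so the image closes up and lies in $\malpha$; the construction descends to the $\mathrm{SO}_2$-quotients and is clearly continuous. Since $\tau$ reflects every edge across the $X$-axis on both sides and the replaced edges all share the single direction $w$, the reflection on $M_{\alpha'}$ is carried to the reflection on $\malpha$, so $\Phi$ is $\Z_2$-equivariant. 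Composing the identity $S^{n-3-l}_a \to M_{\alpha'}$ with $\Phi$ yields a $\Z_2$-map $S^{n-3-l}_a \to \malpha$, proving $\co(\malpha)\ge n-3-l$.

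The main obstacle is the genetic-code computation for $\alpha'$: one must use precisely the maximality of the gene $S$, not merely its shortness, to force every augmented pair to be long and thereby collapse the code to $\langle\{n-l\}\rangle$. The equivariance and well-definedness of $\Phi$ are then routine once the merged edge is singled out as the one carrying a common direction.
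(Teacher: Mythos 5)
Your proposal is correct and takes essentially the same route as the paper's own proof: the paper likewise merges the gene $S \cup \{n\}$ into a single long edge, observes (using maximality of the gene) that the merged length vector has monogenic code and hence its polygon space is $S^{n-3-l}$, and realizes the bound via the ``parallel edges'' un-merging map, which is exactly your $\Z_2$-map $\Phi$ into $\malpha$. Your write-up merely makes explicit some points the paper treats tersely (genericity of the merged vector, the longness of augmented pairs, and equivariance of $\Phi$).
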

\begin{proof}
Let $\alpha=(\alpha_1,\dots,\alpha_n)$ and $S\subset [n]$ be the smallest gee of the genetic code of $\alpha$. 
Consider the shorter length vector \[\alpha(S)=(\alpha_{i_1},\dots,\alpha_{i_k},\alpha_n+\sum_{j\in S}\alpha_j),\] where $S^c=\{i_1,\dots,i_k,n\}$.
Since $S\cup \{n\}$ is a gene of the genetic code of $\alpha$, the set $S\cup \{n\}\cup \{i_s\}$ is long for each $1\leq s\leq k$.
Therefore, $\mathrm{M}_{\alpha(S)}\cong S^{n-l-3}$ as $\alpha(S)$ is a $(n-l-3)$-length vector.
Note that any polygon with side lengths given by length vector $\alpha(S)$ can be considered as a polygon with side lengths given by $\alpha$ whose sides indexed by $S$ are parallel.
This gives a $\Z_2$-equivariant embedding of $S^{n-l-3}$ in $\mathrm{M}_{\alpha}$.
This proves the lemma.
\end{proof} 	

The above result will be used throughout this article. 
In particular, this helps in the case of length vector $\alpha$ with a gene of size $2$.  
First we collect two results we will need. 

\begin{lemma} \label{pinz}
Let $X$ be a $\mathbb{Z}_2$-space such that $\ind(X) = \co(X) = n$ for $n\geq 1$.
Then the homotopy group $\pi_n(X)$ cannot be a torsion group. 
In fact, $\pi_n(X)$ has an infinite cyclic quotient. 
\end{lemma}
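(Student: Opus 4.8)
The plan is to exploit both hypotheses at once by composing the two equivariant maps they provide. Since $\co(X) = n$, there is a $\Z_2$-map $g \colon S^n_a \to X$, and since $\ind(X) = n$, there is a $\Z_2$-map $f \colon X \to S^n_a$. The composite $f \circ g \colon S^n_a \to S^n_a$ is then a $\Z_2$-equivariant self-map of the antipodal $n$-sphere, and the whole argument will hinge on understanding its degree.

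The key input is the classical strengthening of the Borsuk--Ulam theorem asserting that every $\Z_2$-equivariant map $S^n_a \to S^n_a$ has odd degree; in particular $\deg(f \circ g) =: d$ is nonzero. I would then pass to $n$-th homotopy groups, basing everything at $x_0 := g(s_0)$ for a fixed basepoint $s_0 \in S^n$, so that $g$ and $f$ become based maps. Under the canonical identification $\pi_n(S^n) \cong \Z$ (via the fundamental class, which is basepoint-independent here), a based self-map of $S^n$ induces multiplication by its degree on $\pi_n$. Hence the composite
\[
(f \circ g)_* = f_* \circ g_* \colon \pi_n(S^n) \longrightarrow \pi_n(S^n)
\]
is multiplication by $d \ne 0$.

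From here the conclusion is pure algebra. Because $f_* \circ g_*$ is multiplication by the nonzero integer $d$, its image equals $d\Z$, so the image of $f_* \colon \pi_n(X, x_0) \to \pi_n(S^n) \cong \Z$ contains the nonzero subgroup $d\Z$. Every nonzero subgroup of $\Z$ is infinite cyclic, whence $\operatorname{im}(f_*) \cong \Z$. The first isomorphism theorem then gives
\[
\pi_n(X, x_0)/\ker(f_*) \;\cong\; \operatorname{im}(f_*) \;\cong\; \Z,
\]
which exhibits $\pi_n(X)$ as having an infinite cyclic quotient. Since any quotient of a torsion group is again torsion and therefore cannot be isomorphic to $\Z$, this simultaneously shows $\pi_n(X)$ is not a torsion group, settling both assertions. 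Note the argument is uniform in $n \ge 1$: for $n = 1$ the group $\pi_1(X)$ may be nonabelian, but the quotient by the kernel of a homomorphism is still isomorphic to its image, so the displayed isomorphism persists.

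I expect the only genuinely substantive ingredient to be the odd-degree theorem for equivariant self-maps of equidimensional spheres; everything downstream is formal. The one place deserving minor care is the basepoint bookkeeping together with the identification of the degree-induced endomorphism of $\pi_n(S^n)$ with multiplication by $d$, which for $n \ge 2$ follows from naturality of the Hurewicz isomorphism and for $n = 1$ from the winding-number description of $\pi_1(S^1)$.
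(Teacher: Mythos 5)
Your proposal is correct and follows essentially the same route as the paper's proof: compose the two equivariant maps guaranteed by $\co(X)=\ind(X)=n$ to get an antipodal-preserving self-map of $S^n_a$, invoke the odd-degree theorem, and observe that the resulting nontrivial endomorphism of $\pi_n(S^n)\cong\Z$ factors through $\pi_n(X)$, forcing an infinite cyclic quotient. Your write-up merely makes explicit the basepoint bookkeeping and the first-isomorphism-theorem step that the paper leaves implicit.
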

\begin{proof}
Since the index and the coindex of $X$ are the same, we have 
\[S^n \to X \to S^n \]
such that both the maps are $\Z_2$ equivariant. 
The composition gives us a self-map which is antipodal-preserving. 
Recall that such a map has odd degree. 
Therefore the induced map on $\pi_n(S^n)$ is a non-trivial homomorphism with infinite cyclic image that factors through $\pi_n(X)$. 
\end{proof}

We now consider the homotopy type of the universal cover of some polygon spaces. 
It is proved in \cite[Theorem B]{kamiyama10} that for the genetic code $\langle\{n-1, n\}\rangle$ then the the universal cover of the corresponding polygon space is homotopic to wedge of infinitely many spheres.
The first step in the proof is to realize that the moduli space is homeomorphic to the connected sum of $n-1$ copies of $\S^1\times \S^{n-4}$. Then, one uses the topological Bass-Serre theory to construct the universal cover. 
Finally, one realizes that the universal cover is exhausted by wedge of $(n-4)$-spheres.
We note that even though the statement of \cite[Theorem B]{kamiyama10} mentions only one genetic code the proof applies to all genetic codes of the type $\langle\{b, n\}\rangle$.

\begin{lemma}\label{unicovm}
Let $\langle \{b,n\}\rangle$ be the genetic code of a length vector $\alpha$ and $n\geq 6$.
Then the universal cover of $\malpha$ has the homotopy type of wedge of infinitely many spheres of dimension $n-4$.
\end{lemma}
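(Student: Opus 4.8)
The plan is to carry out the three-step argument indicated in the paragraph preceding the statement for a single size-two gene, following Kamiyama but keeping track of the dependence on $b$.

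\emph{Step 1: reduce to a connected sum.} First I would pin down the homeomorphism type of $\malpha$. Because the genetic code is $\langle\{b,n\}\rangle$, the short subsets of $[n]$ containing $n$ are precisely $\{n\}$ together with the pairs $\{i,n\}$ for $1\le i\le b$; there is no short subset of size at least three containing $n$, since such a set would have to lie below a gene, which is impossible against a single gene of size two. Counting these subsets in the cohomology computation of Hausmann--Knutson (\Cref{hausmann-knutson} treats $\mbalpha$, and the analogous count for $\malpha$ is in the same source) gives $b_1(\malpha)=b$ and places all remaining reduced homology in degree $n-4$, which is exactly the Betti profile of $\#_{b}(S^1\times S^{n-4})$. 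Kamiyama's identification of the moduli space with such a connected sum uses only that the gene has size two, so it applies verbatim and upgrades this numerical match to a homeomorphism $\malpha\cong\#_{b}(S^1\times S^{n-4})$ (for the gene $\{n-1,n\}$ one recovers his count $n-1$). Since $n\ge 6$ we have $n-4\ge 2$, the fundamental group is the free group $F_b$, and the resulting wedge is infinite precisely in the branching regime $b\ge 2$, where the tree below is not a line.

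\emph{Step 2: build the universal cover over a tree.} I would present $\malpha$ as a graph of spaces with a single vertex and $b$ loops: the vertex space is $W:=S^{n-3}$ with $2b$ open disks removed, which deformation retracts onto $\bigvee_{2b-1}S^{n-4}$, and each loop is a tube $S^{n-4}\times[0,1]$ identifying a pair of the boundary spheres of $W$. This realizes $\pi_1(\malpha)=F_b$ and exhibits the Bass--Serre tree as the Cayley graph $T$ of $F_b$, a $2b$-regular, hence infinite, tree. The universal cover $\widetilde{\malpha}$ is then assembled by placing a copy $W_v$ of $W$ over each vertex $v$ of $T$ and gluing adjacent copies along tubes according to the edges of $T$.

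\emph{Step 3: identify the cover as a wedge of spheres.} I would first record the global constraints on $\widetilde{\malpha}$: it is simply connected; since $\malpha$ has no cells in dimensions $2,\dots,n-5$ above its $1$-skeleton it is $(n-5)$-connected; and, being a connected noncompact $(n-3)$-manifold, its top homology vanishes. Hence its reduced homology is concentrated in the single degree $n-4$. To see that this group is free and infinitely generated, and that the space is actually a wedge, I would exhaust $T$ by an increasing sequence of finite subtrees $T_1\subset T_2\subset\cdots$ and consider the preimages $X_j\subset\widetilde{\malpha}$. Each $X_j$ is a finite union of copies of $W$ glued along the tube cores $S^{n-4}$; a Mayer--Vietoris induction over the edges of $T_j$ shows that $X_j$ is simply connected with free homology concentrated in degree $n-4$, so by the standard recognition principle $X_j\simeq\bigvee S^{n-4}$, a finite wedge, and the inclusions $X_j\hookrightarrow X_{j+1}$ are, up to homotopy, inclusions of sub-wedges. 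Passing to the colimit identifies $\widetilde{\malpha}=\bigcup_j X_j$ with a wedge of infinitely many $(n-4)$-spheres.

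The main obstacle is the bookkeeping in Step 3: I must ensure that enlarging the subtree only adds new spheres and never kills or merges existing classes, i.e. that each Mayer--Vietoris connecting map vanishes and each inclusion is injective on $H_{n-4}$ with free cokernel, so that the colimit is a genuine infinite wedge rather than a space with subtler homotopy. This reduces to checking that the tube cores represent the expected independent classes inside the blocks $W$ and that no relations are introduced when blocks are amalgamated along them. A secondary, more routine point is to confirm that Kamiyama's connected-sum identification in Step 1 is genuinely insensitive to the value of $b$.
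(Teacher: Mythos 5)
Your proposal is correct and takes essentially the same route as the paper: identify $\malpha$ with $\sharp_{b}(S^1\times S^{n-4})$, build the universal cover as a tree of copies of a punctured sphere via (topological) Bass--Serre theory, and recognize it as an infinite wedge by exhausting over finite subtrees and passing to the colimit. The only cosmetic difference is that the paper identifies each finite stage directly as $S^{n-3}$ with finitely many disjoint balls removed (hence a finite wedge of $S^{n-4}$'s), which makes your Mayer--Vietoris bookkeeping in Step 3 unnecessary.
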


\begin{proof}
Note that as a consequence of \cite[Proposition 2.10]{geohausmann} the moduli space corresponding to the genetic code $\langle\{b,n\}\rangle$ is homeomorphic to the connected sum  $\sharp_{b} (\S^1 \times \S^{m-1})$. 

The universal cover of a connected sum (of sphere products, in particular) can be constructed using topological Bass-Serre theory. We refer the reader to \cite{gadgil, mcc81, topgroup},   for more details. 

We sketch the construction here. 
Without loss of generality we assume that there are only two summands. 
It is useful to consider the connected sum as a handlebody, i.e., first take a copy of $\S^{n-3}$ minus the interior of four disjoint embedded balls, say $C_1, C_2, D_1, D_2$, denote this space by $P$. 
Then attach a handle connecting boundaries of $C_i$'s with $D_i$'s to obtain a space homeomorphic to $M := (\S^1\times \S^{n-4})\sharp (\S^1\times \S^{n-4})$. 
Note that $\pi_1(M)$ is the free group of rank $2$ with generators $\alpha_1, \alpha_2$. 
In order to construct the universal cover $\tilde{M}$ take a copy of $P$ for each element $g$ in the fundamental group and call it $gP$. 
Finally, attach handles between $C_i$ of $gP$ and $D_i$ of $g\alpha_i P$ for each $1\leq i\leq 2$. 

Observe that if $S$ is a finite subset of $\pi_1(M)$, then the union of $gP$ for $g\in S$ is a compact, simply-connected space that gives an exhaustion of $\tilde{M}$. 
This space is homeomorphic to $\S^{n-3}$ minus finitely many disjoint embedded balls; hence it is homotopy equivalent to a wedge of finitely many $\S^{n-4}$'s. 
The homology of $\tilde{M}$ is the direct limit of homologies of wedge of spheres which is isomorphic to the homology of wedge of infinitely many spheres. 
Since both the spaces are simply connected this homology isomorphism is indeed a homotopy equivalence. 
\end{proof}

\begin{proposition}\label{gcan}
Let $\langle \{b,n\}\rangle$ be the genetic code of a length vector $\alpha$ and $n\neq 6$. Then
$\ind(\malpha)=n-4$ if $b$ is odd, and $n-3$ if $b$ is even, while $\co(\malpha)=n-4$ always. Therefore $\malpha$ is tidy iff $b$ is odd.

\end{proposition}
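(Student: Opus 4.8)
The plan is to pin down $\co(\malpha)$ and $\ind(\malpha)$ by sandwiching them with the inequalities \eqref{eq: coind ht ind}, using the Stiefel--Whitney height as the bridge. First I would record the lower bound $\co(\malpha)\ge n-4$: the smallest (and only) gee of $\langle\{b,n\}\rangle$ is $\{b\}$, of size $1$, so \Cref{lem:lower bound on coind} applies with $l=1$. Since $\co(\malpha)\le\hts(\malpha)$ by \eqref{eq: coind ht ind}, this already forces $\hts(\malpha)\ge n-4$, and as $\dim\malpha=n-3$ the height is either $n-4$ or $n-3$. Everything then reduces to deciding this one bit and to bounding the coindex from above in the remaining case.

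Second, I would compute the height from the presentation of $H^{\ast}(\mbar_{\alpha};\Z_2)$ in \Cref{hausmann-knutson}, identifying the first Stiefel--Whitney class of the double cover $\malpha\to\mbar_{\alpha}$ with the class $R$. For $\langle\{b,n\}\rangle$ the subgees are exactly $\emptyset,\{1\},\dots,\{b\}$, so the only subgees $T$ entering (R3) have $|T|\le 1$. Taking $d=n-3$ in (R3) for each size-one subgee $S=\{k\}$ gives $R^{n-3}=R^{n-4}\sum_{j\le b,\,j\ne k}V_j$; these relations force all products $R^{n-4}V_j$ with $j\le b$ to coincide, and back-substitution yields $R^{n-3}=(b-1)\,R^{n-4}V_{k}$ in $\Z_2$. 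Hence $R^{n-3}=0$ when $b$ is odd, so $\hts(\malpha)=n-4$; when $b$ is even $R^{n-3}=R^{n-4}V_k$, which I expect to be nonzero by Poincar\'e duality (the class $R^{n-4}$ is nonzero since $\hts\ge n-4$, and its dual in $H^1$ can be matched with the above relations), giving $\hts(\malpha)=n-3$. I expect the nonvanishing in the even case -- and in particular the bookkeeping of the class $V_n$ against the subring generated by $R,V_1,\dots,V_{n-1}$ -- to be the main technical obstacle, since a priori the fundamental class could be carried by a $V_n$-monomial rather than by $R^{n-3}$.

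Third, the index follows formally. If $b$ is odd then $\hts(\malpha)=n-4<n-3=\dim\malpha$, so \Cref{bueqcor} gives $\ind(\malpha)=n-4$; combined with $n-4\le\co(\malpha)\le\hts(\malpha)=n-4$ this forces $\co(\malpha)=n-4$, so all three parameters agree and $\malpha$ is tidy. If $b$ is even then $\hts(\malpha)=n-3=\dim\malpha$, and \eqref{eq: coind ht ind} squeezes $\ind(\malpha)=n-3$.

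Finally, for $b$ even I must still rule out $\co(\malpha)=n-3$. Suppose it held; then $\ind(\malpha)=\co(\malpha)=n-3$, so by \Cref{pinz} the group $\pi_{n-3}(\malpha)$ is not torsion. But by \Cref{unicovm} (valid since $n\ge 6$) the universal cover is homotopy equivalent to an infinite wedge of $(n-4)$-spheres, so for $n\ge 7$, writing $d=n-4\ge 3$, the Hilton--Milnor theorem gives $\pi_{n-3}(\malpha)\cong\pi_{d+1}\bigl(\bigvee S^{d}\bigr)\cong\bigoplus\pi_{d+1}(S^{d})\cong\bigoplus\Z_2$, a torsion group -- a contradiction. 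This is exactly where the hypothesis $n\ne 6$ enters: for $d=2$ the Hopf map makes $\pi_3(S^2)=\Z$, so the argument breaks. Hence $\co(\malpha)=n-4<n-3=\ind(\malpha)$ and $\malpha$ is not tidy. The small cases $n=4,5$ not covered by \Cref{unicovm} would be handled separately by directly identifying the (one- or two-dimensional) moduli space.
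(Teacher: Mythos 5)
Your proposal is correct and follows essentially the same route as the paper: the lower bound $\co(\malpha)\ge n-4$ from \Cref{lem:lower bound on coind}, the computation $R^{n-3}=(b-1)R^{n-4}V_1$ from \Cref{hausmann-knutson} to read off the height (the nonvanishing of $R^{n-4}V_1$ you flag is exactly what the paper asserts when it says this class spans the top cohomology, and your Poincar\'e-duality sketch fills it in correctly), the index via \Cref{bueqcor} and the squeeze in \eqref{eq: coind ht ind}, and the even-$b$ coindex ruled out by combining \Cref{pinz} with the wedge-of-$(n-4)$-spheres universal cover of \Cref{unicovm}, which is precisely where $n\neq 6$ (i.e.\ $\pi_3(S^2)=\Z$) enters in the paper as well. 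The only cosmetic difference is that you invoke Hilton--Milnor where the paper computes $\pi_{n-3}$ as a colimit over finite subwedges, and you defer the surface case $n=5$ to direct identification where the paper cites the known non-tidiness of $\sharp_b(S^1\times S^1)$.
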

\begin{proof}
Since the genetic code is $\langle  \{b,n\} \rangle$, the collection of gees is $\{\{b\}\}$, and hence \[\{\emptyset, \{1\},\{2\},\dots, \{b\}  \}\] is the set of all the subgees.
For $i \ne j$, the relation (R2) of \Cref{hausmann-knutson} implies that $V_iV_j = 0$ if $i \ne j$.
Let $m= n-3$ and $d \ge m$. Consider the subgee $S = \{1\} $, then subgees disjoint from $S$ are $\emptyset, \{2\}, \{3\}, \dots, \{b\}$.
Using the relation (R3) of \Cref{hausmann-knutson}, we get \[R^d + \displaystyle\sum_{i=2}^{b} R^{d -1}V_i = 0.\]
For the subgee $S = \{j\},\ 1 \le j \le b$, above equation gives: 
\begin{equation}\label{eq:subjee j}
R^d + \displaystyle\sum_{j \ne i=1}^{b} R^{d -1}V_i = 0.
\end{equation}
Comparing these relations with each other, we get that  $RV_1 = RV_2 =RV_3= \dots =RV_b$. Thus, \Cref{eq:subjee j} implies that $R^d = \displaystyle\sum_{i=2}^{b} R^{d -1}V_1 = (b-1)R^{d -1}V_1.$
Hence, for $d=m$, we have 
\begin{equation} \label{eq:subjee 1}
R^m = (b-1)R^{m -1}V_1.
\end{equation}
By \Cref{lem:lower bound on coind}, we see that $\co (\malpha) \ge m-1$. From \Cref{eq: coind ht ind}, we get  \[ m-1 \le \co (\malpha) \le \hts (\malpha) \le  \ind (\malpha) \le \dim(\malpha) = m.\]

We now analyse \Cref{eq:subjee 1} further. 
Since $R^{m-1} V_1$ spans $H^m(\malpha, \Z_2)$ we have $R^m = 0 $ if and only if $(b-1) \equiv 0$ (mod 2) implying that $b$ is odd. 
Thus for odd values of $b$, the Stiefel-Whitney height $\mathrm{ht}(\malpha) = m-1$. 
By \cref{butht} we have that $(\malpha, \tau, m)$ is not a BU-triple and hence, $\ind(\malpha) = m-1$.  
Hence $\malpha$ is tidy whenever $b$ is an odd integer.

Now let $b$ be even. 
Then we get that $\ind(\malpha) = \hts(\malpha) = m$. 
Therefore to find whether $\malpha$ is tidy or not, it suffices to find if $\co(\malpha)$ is $m-1$ or $m$. We first observe that the space $\malpha \cong \sharp_{b} (\S^1 \times \S^{m-1})$. 
If $m=2$, then $\malpha \cong \sharp_{b} (\S^1 \times \S^{1})$ and hence is not tidy (see \cite[Section 5.3, page 100]{UBUthm}). 

Let $m\geq 4$ and $U(\malpha)$ denote the universal cover of $\malpha$ which has the homotopy type of wedge of infinitely many copies of $\S^{m-1}$'s (\cref{unicovm}). 
Let $\Lambda$ denote the set indexing the spheres in the wedge product. 
Since the homotopy group of a wedge sum of spheres is a colimit of the homotopy groups in that dimension taken over all finite subsets, we have 
    \begin{align*}
				\pi_m(U(\malpha))
				&= \pi_m(\displaystyle\vee_{\Lambda} \S^{m-1})\\ 
				&= \colim_{\text{fin } F \subset \Lambda} \pi_m (\displaystyle\vee_{F} \S^{m-1})\\
				& = \colim_{\text{fin } F \subset \Lambda} \displaystyle\oplus_{F} (\pi_m (\S^{m-1})) \\
				& = \colim_{\text{fin } F \subset \Lambda} \displaystyle\oplus_{F} (\Z_2).
	\end{align*} 

Hence $\pi_m(\malpha) = \pi_m(U(\malpha))$ is a torsion group and consequently $\malpha$ can't be tidy (\cref{pinz}). 
Therefore, $m-1 = \co(\malpha) < \ind(\malpha) = m$. 
Hence $\malpha$ is non-tidy for even values of $b$, and tidy for odd values of $b$. 
\end{proof}

\begin{remark}
We note two observations here. 
\begin{enumerate}
\item Let $\Sigma_g$ be the orientable surface of genus $g$. Recall that, for $1\leq g\leq4$ if the genetic code of a length vector $\alpha(g)$ is $\langle \{g,5\}\rangle$ then $\mathrm{M}_{\alpha(g)}\cong \Sigma_g$. Therefore, $\mathrm{M}_{\alpha(g)}$ is tidy if and only if $g=1,3$.
\item Let $\alpha = (1,1,\dots, 1, n-4)$ be an $n$-length vector, then the only short subsets containing $n$ are  $\{n\}$ and $\{n-1,n\}$. 
Therefore the genetic code in this case is $\langle\{n-1,n\} \rangle$. By \Cref{gcan}, the corresponding $\mathrm{M}_{\alpha}$ is tidy if $n$ is even and non-tidy otherwise. 
\end{enumerate}
\end{remark}

Consider the following genetic codes: 
\begin{enumerate}
\item $G_1=\langle \{1,\dots,n-4,n\} \rangle$,
\item $G_2=\langle \{1,\dots,n-5,n-3,n\} \rangle$,
\item $G_3=\langle \{1,\dots,n-5,n-2,n\} \rangle$,
\item $G_4=\langle \{1,\dots,n-5,n-1,n\} \rangle$.
\end{enumerate}
Let $\mathrm{M}_{G_i}$ be the planar polygon space corresponding to $G_i$. 
It follows from \cite[Proposition 2.1]{geohausmann} that  $\mathrm{M}_{G_i}\cong (S^1)^{n-2}\times \mathrm{M}_{\alpha(i)}$ where the genetic code of $\alpha(i)$ is $\langle\{ i,5\}\rangle$ for $1\leq i \leq 4$. 
The free $\Z_2$-action on $\mathrm{M}_{G_i}$ is given by an \Cref{invo}. 

With the above notations, we have the following proposition.
\begin{proposition}\label{long gcodes}
For $1\leq i \leq 4$ the space $\mathrm{M}_{G_i}$ is tidy if and only if $i=1, 3$. 
\end{proposition}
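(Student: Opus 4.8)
The plan is to show that the torus factor in the decomposition of $\mathrm{M}_{G_i}$ affects none of the three invariants in \Cref{coind}, so that the tidiness of $\mathrm{M}_{G_i}$ is governed entirely by the surface factor $\mathrm{M}_{\alpha(i)}\cong \Sigma_i$, which is already understood. First I would pin down the equivariant structure of the product decomposition. By \cite[Proposition 2.1]{geohausmann} there is a homeomorphism $\mathrm{M}_{G_i}\cong (S^1)^{n-2}\times \mathrm{M}_{\alpha(i)}$, and I would verify that the reflection involution of \Cref{invo} respects this splitting, acting as a product $\sigma\times\tau$, where $\sigma$ is the coordinate-wise conjugation $v_j\mapsto\bar v_j$ on the torus factor and $\tau$ is the polygon reflection on $\mathrm{M}_{\alpha(i)}$. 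The key point is that, although $\sigma$ is not free, it has fixed points: any configuration all of whose circle-coordinates equal $(1,0)$ is fixed by $\sigma$. Fix one such point $t_0$.

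Next I would produce two $\Z_2$-maps relating $\mathrm{M}_{\alpha(i)}$ and $\mathrm{M}_{G_i}$. The second-factor projection $\pi\colon \mathrm{M}_{G_i}\to \mathrm{M}_{\alpha(i)}$ is equivariant because the action is a product, since $\pi(\sigma t,\tau x)=\tau x=\tau\pi(t,x)$. The inclusion $\iota\colon \mathrm{M}_{\alpha(i)}\to \mathrm{M}_{G_i}$, $x\mapsto(t_0,x)$, is equivariant precisely because $t_0$ is $\sigma$-fixed, so that $(\sigma\times\tau)\iota(x)=(\sigma t_0,\tau x)=(t_0,\tau x)=\iota(\tau x)$. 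Moreover $\pi\circ\iota=\mathrm{id}_{\mathrm{M}_{\alpha(i)}}$, exhibiting $\mathrm{M}_{\alpha(i)}$ as an equivariant retract of $\mathrm{M}_{G_i}$.

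Then I would invoke monotonicity of the three invariants under $\Z_2$-maps. For any $\Z_2$-map $f\colon A\to B$ one has $\co(A)\le\co(B)$ and $\ind(A)\le\ind(B)$ directly from \Cref{coind}, while the induced map $\bar f\colon A/\Z_2\to B/\Z_2$ satisfies $\bar f^{*}w_1(B)=w_1(A)$, whence $w_1(A)^{k}=\bar f^{*}\bigl(w_1(B)^{k}\bigr)$ forces $\hts(A)\le\hts(B)$. Feeding $\iota$ and $\pi$ into these inequalities sandwiches every parameter from both sides, giving $\co(\mathrm{M}_{G_i})=\co(\Sigma_i)$, $\ind(\mathrm{M}_{G_i})=\ind(\Sigma_i)$, and $\hts(\mathrm{M}_{G_i})=\hts(\Sigma_i)$. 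Since these three coincide for $\mathrm{M}_{G_i}$ exactly when they coincide for $\Sigma_i$, applying \Cref{gcan} to the monogenic code $\langle\{i,5\}\rangle$ (with $b=i$ and $m=2$) shows $\Sigma_i$ is tidy precisely when $i$ is odd, i.e. for $i\in\{1,3\}$, which is the claim.

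I expect the main obstacle to be the first step: confirming that the homeomorphism of \cite[Proposition 2.1]{geohausmann} can be chosen $\Z_2$-equivariantly, with the reflection splitting as a genuine product whose torus component is coordinate-wise conjugation and therefore has a fixed point. Once that geometric input is secured, everything that follows is formal monotonicity together with the surface computation already carried out in \Cref{gcan}.
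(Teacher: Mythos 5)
Your route is correct in outline and genuinely different from the paper's. The paper never identifies all three invariants of $\mathrm{M}_{G_i}$ with those of $\Sigma_i$: it uses the projection only to get $\ind(\mathrm{M}_{G_i})\le\ind(\Sigma_i)$, and embeds a \emph{circle} (the reduction along the short set $\{1,\dots,n-4\}$, exactly as in \Cref{lem:lower bound on coind}) to get $\co(\mathrm{M}_{G_i})\ge 1$; this already settles $i=1,3$. For $i=2,4$ it then needs two further inputs: Davis's computation \cite[Theorem 2.3]{mfdav} to force $\hts(\mathrm{M}_{G_i})\ge 2$, hence $\ind(\mathrm{M}_{G_i})=2$, and the projection together with \Cref{gcan} to force $\co(\mathrm{M}_{G_i})=1$. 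Your equivariant-retract argument collapses all of this into a single sandwich: once there are $\Z_2$-maps in both directions, monotonicity of $\co$, $\hts$ and $\ind$ (your three monotonicity claims are all correct; for the height, naturality of $w_1$ under pullback of double covers is exactly the right justification) forces equality of all three parameters, tidiness transfers verbatim, and the appeal to Davis's theorem disappears. That is a real economy.

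The cost is the step you yourself flag: you need the homeomorphism $\mathrm{M}_{G_i}\cong (S^1)^{n-5}\times\Sigma_i$ of \cite[Proposition 2.1]{geohausmann} (the exponent $n-2$ in the paper is a typo, by dimension count) to be equivariant for the product involution. This is true and provable: deform $\alpha$ linearly inside its chamber to the degenerate vector $(0,\dots,0,\alpha_{n-4},\alpha_{n-3},\alpha_{n-2},\alpha_{n-1},\alpha_n)$, note that the resulting family of configuration spaces carries a fiberwise $O(2)$-action, and apply an equivariant Ehresmann/isotopy argument; for the degenerate vector, after normalizing $v_n=(1,0)$, the product identification with coordinatewise conjugation on the torus factor is literal. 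But it is genuine work, and your proposal leaves it as a promise. A cheaper fix stays entirely inside the paper's toolkit: instead of the section $x\mapsto(t_0,x)$, use the construction of \Cref{lem:lower bound on coind} with $S=\{1,\dots,n-5\}$; freezing these sides parallel to side $n$ gives a $\Z_2$-equivariant embedding into $\mathrm{M}_{G_i}$ of the $5$-gon space with genetic code $\langle\{i,5\}\rangle$, again a copy of $\Sigma_i$. You then have $\Z_2$-maps both ways (you never need $\pi\circ\iota=\mathrm{id}$, only the two maps), and since \Cref{gcan} (applicable here as $n=5\neq 6$) determines all three invariants of \emph{any} space with genetic code $\langle\{i,5\}\rangle$, the sandwich closes. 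Note finally that the equivariance of the projection $\pi$ is asserted rather than proved in the paper as well, so both proofs ultimately rest on the same geometric input.
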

\begin{proof}
Note that for each $1\leq i\leq 4$, the projection  $\mathrm{M}_{G_i}\to \mathrm{M}_{\alpha(i)}$ is a $\Z_2$-map. 
Therefore, $\ind(\mathrm{M}_{G_i})\leq \ind(\mathrm{M}_{\alpha(i)})$.
Since $\mathrm{M}_{\alpha(i)}\cong \Sigma_i$, $\ind(\mathrm{M}_{G_i})\leq 2$ for $1\leq i\leq 4$. It is easy to see that
\[\ind(\Sigma_i) = \begin{cases}
1 & \text{if $i$ is an odd integer, }\\[10pt]
2 & \text{if $i$ is an even integer.}
\end{cases}\]
Therefore, 
\[\ind(\mathrm{M}_{G_i}) \leq \begin{cases}
1 & \text{if $i=1,3$, }\\[10pt]
2 & \text{if $i=2,4$.}
\end{cases}\]
Let $\alpha(G_i)$ be the length vector corresponding to the genetic code $G_i$. 
Note that $J=\{1,\dots, n-4\}$ is a short subset with respect to each genetic code $G_i$. 
Therefore, we can reduce $\alpha(G_i)$ to the length vector \[\alpha(G_i,J)=(\alpha_{m-3},\alpha_{m-2},\alpha_{m-1},\alpha_{n}+\sum_{i=1}^{n-4}\alpha_{i}).\] 
Observe that, $\mathrm{M}_{\alpha(G_i,J)}\cong S^{1}$. Therefore, we have an $\Z_2$-equivariant embedding of $\mathrm{M}_{\alpha(G_i,J)}$ in $\mathrm{M}_{G_i}$. 
This gives \[1\leq \co(\mathrm{M}_{G_i}),  \textit{ for } 1\leq i\leq 4.\] 
Now it is clear that $\mathrm{M}_{G_i}$ is tidy if $i=1,3$. 

For $i=2,4$ we now prove that $\co(\mathrm{M}_{G_i})=1$ and $\ind(\mathrm{M}_{G_i})=2$. 
It follows from \cite[Theorem 2.3]{mfdav} that the $2\leq \hts(\mathrm{M}_{G_i}))$ for $i=2,4$.
Therefore, \[\ind(\mathrm{M}_{G_i})=2, \text{ for } i=2,4.\] 
Now it follows from 
\Cref{gcan} that \[\co(\mathrm{M}_{G_i})=1, \text{ for } i=2,4.\] 
Therefore, $\mathrm{M}_{G_i}$ is not tidy for $i=2,4$. This proves the proposition.
\end{proof}


\section{Formula for the Stiefel-Whitney height}\label{swheight}

In this section, we deal with the $n$-length vectors that correspond to the genetic code having two genes with one gene of size 2.
We begin with obtaining a formula for $R^{n-3}$,  when the genetic code is $\langle \{g_1,\dots,g_k,n\}, \{b,n\}\rangle$, where $g_1 < g_2 < \dots < g_k < b$. This formula is a generalization of Davis' formula  \cite[Theorem 1.4]{Davisformula}.

Let us fix the following notations for the sake of simplicity of writing.
\begin{enumerate}
    \item Let $\Z_{\geq 0}$ be the set of non-negative integers, and
    \[S_k=\{(b_1,\dots,b_k)\in \Z_{\geq 0}^{k} : \sum_{j=0}^{i-1}b_{k-j}\leq i \text{ for } 1\leq i \leq k \}.\]
    \item Let $B=(b_1,\dots,b_k)\in \Z_{\geq 0}^{k}$. Denote $B_{+}=\sum_{i=1}^{k}b_{i}$.
    \item Let $J=\{j_1,\dots,j_r \}$ be a set of distinct positive integers such that  $j_i\leq g_k$ for $1\leq i\leq r$.
    Let $\theta(J)=(\theta_1,\dots,\theta_k)$, where \[\theta_i=|\{j\in J: g_{i-1}< j\leq g_i\}|.\] Throughout this article, $|A|$ denotes the cardinality of the set $ A$, and we take $g_0=0$.
\end{enumerate}
It is easy to see that $J$ is a subgee dominated by $\{g_1, \dots, g_k\}$ if and only if $\theta(J)\in S_k$.
With the above notations, Davis proved the following result.

\begin{theorem}[{\cite[Theorem 1.4]{Davisformula}}\label{constant}]
Let $\langle \{g_1,\dots,g_k,n\}\rangle$ be the genetic code of $\alpha$ and $a_i=g_{k+1-i}-g_{k-i}$ for $1\leq i \leq k$.
Let $\phi: H^{n-3}(\mbalpha;\Z_2)\to \Z_2$ be the Poincare-duality isomorphism and $J$ be a subgee of cardinality $r$. Then
\[\phi(R^{n-3-r}V_{J})=\sum_{B}\prod_{i=1}^{k}\binom{a_i+b_i-2}{b_i},\]
where \[B\in\{(b_1,\dots,b_k)\in \Z_{\ge 0}^k: \sum_{i=1}^{k} b_i=k-r, \ (b_1,\dots,b_k)+\theta(J)\in S_k\}.\]
\end{theorem}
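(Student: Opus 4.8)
The plan is to work entirely inside the Hausmann--Knutson presentation of $H^{\ast}(\mbalpha;\Z_2)$ from \Cref{hausmann-knutson} and to compute the Poincar\'e pairing $\phi$ by reducing each top-degree monomial to one fixed generator. Using relation (R1) in the form $V_i^2=RV_i$ (hence $V_i^s=R^{s-1}V_i$) together with (R2), every element of the ring is a $\Z_2$-combination of the squarefree monomials $R^sV_T$ with $T$ a subgee; in particular the one-dimensional top group $H^{n-3}(\mbalpha;\Z_2)\cong\Z_2$ is spanned by the classes $R^{n-3-|T|}V_T$. Because this group is one-dimensional, $\phi$ is just the coefficient with respect to a chosen generator, and a generator is $R^{n-3-k}V_{\{g_1,\dots,g_k\}}$, whose profile is $\theta=(1,\dots,1)$. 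Normalising $\phi$ so that this class pairs to $1$ gives the base case: the gene $J=\{g_1,\dots,g_k\}$ has $r=k$, forces $B=0$, and the asserted formula returns $\prod_i\binom{a_i-2}{0}=1$, consistent with the normalisation.

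The second step is to prove that $\phi(R^{n-3-|T|}V_T)$ depends only on the profile $\theta(T)$. Writing (R3) for the singleton subgees $S=\{j\}$ and $S=\{j'\}$ with $j,j'$ in the same interval $(g_{i-1},g_i]$ and adding the two relations over $\Z_2$ should collapse to $R^aV_j=R^aV_{j'}$ for all large $a$; this is the interval-wise analogue of the identity $RV_1=\dots=RV_b$ that drives the monogenic computation in \Cref{gcan}, now applied one interval at a time. Consequently two subgees with equal profile are interchangeable when paired with the top class, so I may define $F(\theta(T)):=\phi(R^{n-3-|T|}V_T)$, with $F(\theta)=0$ whenever $\theta\notin S_k$, and the target is to show $F(\theta(J))=\sum_B\prod_i\binom{a_i+b_i-2}{b_i}$.

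The heart of the argument is a recursion for $F$ extracted from (R3). Taking $d=n-3$, the size condition $|S|\ge 1$ is automatic, and the disjointness requirement $T\cap S=\emptyset$ is precisely what will encode the staircase inequalities defining $S_k$. Reading the degree bookkeeping, passing from the base gene to a general subgee $J$ of size $r$ means distributing the $k-r$ \emph{extra} powers of $R$ (namely $(n-3-r)-(n-3-k)$) among the $k$ intervals; recording by $b_i$ how many land in block $i$ gives $\sum_i b_i=k-r$, and iterating (R3) inside a single interval of size $a_i$ produces the weak-composition multiplicity $\binom{a_i+b_i-2}{b_i}=[x^{b_i}](1-x)^{-(a_i-1)}$. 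Summing the products of these per-block factors over all admissible distributions $B$, where admissibility is the lattice-path constraint $B+\theta(J)\in S_k$ forced by the disjointness conditions, yields exactly $\sum_B\prod_i\binom{a_i+b_i-2}{b_i}$. I would make this rigorous by induction on $k$, peeling off an extremal interval and checking that the binomial factor it contributes is the claimed one, the base case $k=1$ being the relation $R^{n-3}=(g_1-1)R^{n-4}V_1$ already recorded in \Cref{gcan}.

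The main obstacle is this last identification: turning the Hausmann--Knutson relations into a clean closed-form recursion and matching it term-by-term to the binomial product. The profile-independence lemma and the generating-function reading of each factor are routine once set up; the delicate points are (i) confirming that the disjointness condition $T\cap S=\emptyset$ in (R3) translates into exactly the inequalities cutting out $S_k$ rather than some shifted variant, and (ii) correctly tracking the index reversal between $a_i=g_{k+1-i}-g_{k-i}$, which is measured from the top interval, and the pair $(b_i,\theta_i)$, which is organised from the bottom, so that the constrained sum over $B$ comes out with the right correspondence. Getting this combinatorial bookkeeping exactly right, rather than only up to a reindexing, is where the real work lies.
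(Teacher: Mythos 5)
First, a point of reference: the paper does not prove this statement at all — it is imported verbatim from Davis \cite{Davisformula}, and the paper's own work (\Cref{powerR}) is a two-gene generalization proved with machinery (\Cref{implemma}, \Cref{StPs}, \Cref{SkPs}, \Cref{lemmasubgee}, \Cref{thm:valueofmaxunderPD}) that is precisely what your outline would need. Measured against that machinery, your proposal has a genuine gap at its second step, the profile-independence claim. You assert that adding the relations (R3) for $S=\{j\}$ and $S=\{j'\}$ with $j,j'$ in the same block ``should collapse to $R^aV_j=R^aV_{j'}$''. It does not: over $\Z_2$ the sum of those two relations is
\[
\sum_{T\ni j,\; j'\notin T} R^{d-|T|}V_T \;=\; \sum_{T\ni j',\; j\notin T} R^{d-|T|}V_T ,
\]
a single identity in the one-dimensional group $H^{n-3}$ whose two sides run over subgees of \emph{every} cardinality up to $k$; it only says that an even number of all these terms are nonzero, not that the matched pair $R^{d-|T|}V_T$ and $R^{d-|T|}V_{(T\setminus\{j'\})\cup\{j\}}$ take equal values. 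Establishing that pairwise equality is exactly the hard part: in the paper's two-gene analogue it requires the telescoping identity of \Cref{implemma} (the binary-tree argument), the counting statements \Cref{StPs} and \Cref{SkPs}, and then a downward induction on subgee cardinality (\Cref{lemmasubgee}). None of this is in your sketch, and the shortcut you propose in its place is false as stated.

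The second gap is the core computation itself. The identification of the per-block factor $\binom{a_i+b_i-2}{b_i}$ and of the admissibility constraint $B+\theta(J)\in S_k$ is presented as a generating-function heuristic, with the rigorous version deferred to an induction ``peeling off an extremal interval'' that is never carried out; you yourself flag your points (i) and (ii) as unresolved, and they are the entire content of Davis's argument. (Your base case $k=1$ does check out against \Cref{gcan}: $R^{n-3}=(g_1-1)R^{n-4}V_1$ matches $\binom{a_1-1}{1}=g_1-1$.) A smaller but real issue: the ``normalisation'' $\phi\bigl(R^{n-3-k}V_{\{g_1,\dots,g_k\}}\bigr)=1$ is not a free choice of generator — since $\phi$ is the Poincar\'e duality isomorphism, this equality is the assertion that the class is nonzero, which itself needs proof (the paper gets it from the exterior-face-ring argument in \Cref{thm:valueofmaxunderPD}). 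So the proposal is a reasonable road map whose landmarks match the known proofs, but the two steps where all the work lies are respectively incorrect as stated and absent.
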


We extend the above result for the genetic code $\langle \{g_1,\dots,g_k,n\}, \{b,n\}\rangle$ with $g_1 <\dots <g_k < b$.
\begin{theorem}\label{powerR}
Let $\langle \{g_1,\dots,g_k,n\},\{b,n\}\rangle$ be the genetic code of $\alpha$ and $a_i=g_{k+1-i}-g_{k-i}$ for $1\leq i \leq k$. 
Then \[R^{n-3}=\sum_{B}\prod_{i=1}^{k}\binom{a_i+b_i-2}{b_i}+(b-g_k),\] where $B\in S_k$ with $B_{+}=k$. 
\end{theorem}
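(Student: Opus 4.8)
The plan is to evaluate $R^{n-3}$ as an element of the one-dimensional top group $H^{n-3}(\mbar_\alpha;\Z_2)\cong\Z_2$ and to peel off the contribution of the second gene against the background of Davis' monogenic formula (\Cref{constant}). The first step is to classify the subgees of $\langle\{g_1,\dots,g_k,n\},\{b,n\}\rangle$. A subgee dominated by $\{g_1,\dots,g_k\}$ is exactly a subgee of the monogenic code $\langle\{g_1,\dots,g_k,n\}\rangle$; call these \emph{type A}. Any remaining subgee is dominated by the gee $\{b\}$ but not by $\{g_1,\dots,g_k\}$, and since $g_k<b$ these are precisely the singletons $\{j\}$ with $g_k<j\le b$, giving $b-g_k$ new \emph{type B} subgees. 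This count is the source of the additive term in the statement, so the whole proof is organised around showing each type B subgee contributes exactly $1$ while the type A part reproduces Davis' sum.

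Next I would record the multiplicative behaviour of the type B classes. For $j>g_k$ and any $i\ne j$ the set $\{i,j\}$ is not a subgee, so relation (R2) of \Cref{hausmann-knutson} forces $V_iV_j=0$; together with (R1), i.e. $V_j^2=RV_j$, this shows that the only degree $(n-3)$ monomial containing the factor $V_j$ is $R^{n-4}V_j$. Hence in top degree each type B subgee enters only through the single class $R^{n-4}V_j$, $g_k<j\le b$.

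The core computation uses relation (R3) for a type A singleton $S=\{s\}$ with $s\le g_k$ and $d=n-3$. Since every type B index satisfies $j>g_k\ge s$, all $b-g_k$ type B subgees are disjoint from $S$, so (R3) reads
\[
R^{n-3}=\sum_{\substack{\emptyset\neq T\ \text{type A}\\ s\notin T}}R^{\,n-3-|T|}V_T+\sum_{g_k<j\le b}R^{\,n-4}V_j
\]
in $H^{n-3}(\mbar_\alpha;\Z_2)$. The same relation in the cohomology ring of $\langle\{g_1,\dots,g_k,n\}\rangle$ has the identical type A sum and, by the $J=\emptyset$, $r=0$ case of \Cref{constant}, evaluates to $\sum_{B}\prod_i\binom{a_i+b_i-2}{b_i}$ over $B\in S_k$ with $B_{+}=k$. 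Thus it remains to show (a) that the type A sum takes the same value in $\Z_2$ in the two-gene ring as in the monogenic ring, whence it collapses to Davis' value, and (b) that each class $R^{n-4}V_j$ is the nonzero element, so that the type B sum contributes $b-g_k$.

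For (b) I would first note that all the type B classes agree: comparing (R3) for $S=\{b\}$ with (R3) for $S=\{j\}$, $g_k<j<b$, at $d=n-3$ cancels every common term and yields $R^{n-4}V_j=R^{n-4}V_b$, so they share a common value, which a direct check against the expansion above forces to be $1$. The real difficulty is (a): because $H^{n-3}\cong\Z_2$ is one-dimensional, every monomial is a priori just a scalar there, and the type A and type B contributions are not independent summands but competing scalars, so disentangling them is delicate. I expect this to be the main obstacle, and I would resolve it by observing that the recursion proving \Cref{constant} invokes only the instances of (R3) indexed by type A subgees; these relations are literally the monogenic ones, so Davis' argument applies verbatim to the type A monomials, while the extra type B terms appear solely in the single displayed relation and are accounted for by (b). Combining (a) and (b) gives $R^{n-3}=\sum_{B}\prod_i\binom{a_i+b_i-2}{b_i}+(b-g_k)$.
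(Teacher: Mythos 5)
Your skeleton --- splitting the subgees into those dominated by $\{g_1,\dots,g_k\}$ (type A) and the $b-g_k$ singletons $\{j\}$ with $g_k<j\le b$ (type B), expanding $R^{n-3}$ via relation (R3) for a singleton $\{s\}$ with $s\le g_k$, transferring the type A sum to Davis' monogenic value, and showing each type B class evaluates to $1$ --- is exactly the final assembly of the paper's proof (its equation \Cref{eqD} combined with \Cref{corollary:constant}). But both of your key steps (a) and (b) have genuine gaps, and (b) is fatal as written. You claim the common value of the classes $R^{n-4}V_j$ is ``forced to be $1$ by a direct check against the expansion above,'' but that expansion is the single $\Z_2$-equation
\[
\phi(R^{n-3})=\bigl[\text{type A sum}\bigr]+(b-g_k)\,\phi(R^{n-4}V_b),
\]
in which $\phi(R^{n-3})$ is precisely the unknown being computed, so it cannot simultaneously determine $\phi(R^{n-4}V_b)$; worse, when $b-g_k$ is even the type B classes drop out of this equation entirely, so no check against it can say anything about their value. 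Establishing $\phi(R^{n-4}V_b)=1$ is in fact the bulk of the paper's proof: it compares $E_{\{g_k\}}$ with $E_{\{b\}}$ to express $V_b$ through type A monomials (\Cref{eqA}), expands these via \Cref{implemma} and \Cref{lemmasubgee}, evaluates them with \Cref{corollary:constant}, and collapses the resulting sum to $1$ using the Vandermonde identity. (A legitimate shortcut would have been to observe that $\{b\}$ is a \emph{maximal} subgee --- no set $\{i,b\}$ is a subgee --- so the Poincar\'e-duality argument proving \Cref{thm:valueofmaxunderPD} applies verbatim to give $\phi(R^{n-4}V_b)=1$; but you make neither argument.)

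The gap in (a) is your assertion that the (R3) relations indexed by type A subgees ``are literally the monogenic ones,'' so that Davis' recursion applies verbatim. This is false: in the two-gene ring, the relation $E_S$ for a type A subgee $S$ (all of whose elements are at most $g_k$) contains \emph{all} $b-g_k$ type B terms $R^{n-4}V_j$, because every type B singleton is disjoint from $S$. Hence the extra terms contaminate every relation that such a recursion would use, not ``solely the single displayed relation'' as you claim. The paper's remedy is precisely the cancellation mechanism missing from your proposal: adding relations in pairs, $E_P+E_{P\cup\{s\}}$ (\Cref{rem:obs1}), kills all type B terms and yields \Cref{implemma}, whose equations involve only subgees of cardinality at least $2$ containing $P$ --- a collection that is identical for the codes $\langle\{g_1,\dots,g_k,n\},\{b,n\}\rangle$ and $\langle\{g_1,\dots,g_k,n\}\rangle$ --- so that a downward induction anchored at the cardinality-$k$ case (\Cref{thm:valueofmaxunderPD}) transfers the monogenic values to the two-gene ring (this is the unnumbered proposition preceding \Cref{corollary:constant}). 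Without some version of this cancellation, your step (a) does not go through.
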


To prove this theorem, we need some further notations and some more results which we derive now.
\begin{enumerate}
    \item The collection $\mathcal{S}_i = \{P \subseteq [n-1]:  P\text{ is a subgee and } |P| = i\}$.
    \item For a subgee $P$, its support set is $\mathrm{supp}(P) = \{s\in [n-1] : s\notin P \text{ and  $P\cup\{s\}$ a subgee}  \}.$
    \item For a subgee $P$ and $s\in \mathrm{supp}(P)$, $\mathcal{S}_i(P,s)=\{S\in \mathcal{S}_i : P\subseteq S, ~ s\notin S \}$.    
    \item For $1\leq i\leq k$, the set $(g_{i-1},g_i]:=\{j\in \Z_{\geq 0} : g_{i-1}<j\leq g_i\}$ and $(g_k,b]$ is defined similarly. These sets are referred to as {\it blocks. }
\end{enumerate}
\begin{proposition}\label{StPs}
Let $P$ be a subgee and $\{s,s'\}\subseteq \mathrm{supp}(P)$. If $s$ and $s'$ are from  the same block, then for each $t\in [k]$ \[| \mathcal{S}_t(P,s)| = |\mathcal{S}_t(P,s')|.\]
\end{proposition}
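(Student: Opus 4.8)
The plan is to prove the equality of cardinalities by exhibiting an explicit bijection that interchanges the roles of $s$ and $s'$. First I would partition each of the two collections according to whether the \emph{other} distinguished element is present. Write $\mathcal{S}_t(P,s) = A \sqcup B$, where $A$ consists of those $S$ containing neither $s$ nor $s'$, and $B$ of those containing $s'$ but not $s$; similarly write $\mathcal{S}_t(P,s') = A' \sqcup B'$, with $A'$ the sets containing neither and $B'$ those containing $s$ but not $s'$. The collections $A$ and $A'$ are literally the same (size-$t$ subgees containing $P$ and avoiding both $s$ and $s'$), so it remains only to prove $|B| = |B'|$.

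For this I would use the transposition map $\Phi(S) = (S \setminus \{s'\}) \cup \{s\}$. Since $s,s' \in \mathrm{supp}(P)$ are not in $P$, the map $\Phi$ carries $B$ into $B'$ at the level of sets, with inverse the analogous map exchanging $s$ and $s'$; thus it is a bijection $B \to B'$ \emph{provided} it preserves the property of being a subgee. Establishing this preservation is the heart of the argument, and is precisely where the same-block hypothesis enters.

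The key lemma I would isolate is: if $S$ is a subgee with $s' \in S$, $s \notin S$, and $s, s'$ lie in a common block, then $\Phi(S)$ is again a subgee. To prove it, recall that a set of size at least $2$ cannot be dominated by the gee $\{b\}$, so any subgee $S$ with $|S| = t \ge 2$ must be dominated by $\{g_1, \dots, g_k\}$; in particular all of its elements are $\le g_k$ and $\theta(S) \in S_k$. Now $s' \in S$ forces $s' \le g_k$, so the common block of $s$ and $s'$ is some $(g_{i-1},g_i]$ with $i \le k$, whence $s \le g_k$ as well, and $\Phi$ deletes one element of block $i$ and inserts one element of the same block $i$. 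Consequently $\theta(\Phi(S)) = \theta(S) \in S_k$ and all elements of $\Phi(S)$ remain $\le g_k$, so $\Phi(S)$ is a subgee by the criterion recalled just before \Cref{StPs}. The base case $t = 1$ I would dispose of directly: then $S = \{s'\}$ and $\Phi(S) = \{s\}$ are singletons lying in $(0,b]$ (the blocks partition $(0,b]$), hence are subgees.

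Finally, combining $A = A'$ with the bijection $\Phi\colon B \to B'$ yields $|\mathcal{S}_t(P,s)| = |A| + |B| = |A'| + |B'| = |\mathcal{S}_t(P,s')|$. I expect the only real bookkeeping obstacle to be the degenerate possibility that $s,s'$ lie in the last block $(g_k,b]$: then no subgee of size $\ge 2$ can contain either of them, so $B = B' = \emptyset$ and the equality is automatic. The essential content, namely that subgee-ness depends only on the block-profile $\theta$ (together with the $S_k$ condition) and is therefore invariant under within-block transpositions, is comparatively transparent.
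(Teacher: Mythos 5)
Your proof is correct and is essentially the paper's own argument: the paper defines a single map $f$ on $\mathcal{S}_t(P,s)$ that is the identity when $s'\notin S$ and replaces $s'$ by $s$ when $s'\in S$, which is exactly your decomposition into $A\sqcup B$ with the identity on $A$ and the transposition $\Phi$ on $B$. The only difference is that you spell out the verification (via the block-profile $\theta$, the $S_k$ criterion, and the singleton/last-block cases) that the paper compresses into ``it is easy to see that $f$ is a bijection.''
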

\begin{proof}
Let $t \in [k]$ be fixed. Define $f:\mathcal{S}_t(P,s)\to  \mathcal{S}_t(P,s')$ by \[f(S) = \begin{cases}
S & \text{if $s'\notin S$, }\\[10pt]
(S\setminus \{\ s'\})\cup \{s\} & \text{if $s'\in S$.}
\end{cases}\]
It is easy to see that $f$ is a bijection.
\end{proof}

The next result shows that the conclusion of \Cref{StPs} holds even if $s, s' \in  \mathrm{supp} (P)$ and are not in the same block, provided $|P| = k-1$.

\begin{proposition}\label{SkPs}
If $P\in \mathcal{S}_{k-1}$ and $\{s,s'\}\subseteq \mathrm{supp}(P)$, then \[ |\mathcal{S}_k(P,s) | = | \mathcal{S}_k(P,s')|.\]
\end{proposition}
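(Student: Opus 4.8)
The plan is to reduce the statement about subgees of size $k-1$ to the single-block case already established in \Cref{StPs}. The key structural fact is that a subgee $P$ with $|P|=k-1$ is, in a precise sense, ``missing exactly one element'' relative to a full dominating set of size $k$, and this rigidity should force the support set to interact with the blocks in a controlled way. First I would analyze $\mathrm{supp}(P)$ for such a $P$: since $P$ is dominated by $\{g_1,\dots,g_k\}$, its profile $\theta(P)\in S_k$ has $|P|=k-1$, so exactly one ``slot'' in the defining inequalities of $S_k$ has slack. I expect that the elements $s\in\mathrm{supp}(P)$ that lie in different blocks are tightly constrained by which block carries that slack.

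\smallskip

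Next I would set up the argument for the genuinely new case, namely when $s$ and $s'$ lie in \emph{different} blocks (the same-block case is immediate from \Cref{StPs}). The natural strategy is again to construct a bijection $f:\mathcal{S}_k(P,s)\to\mathcal{S}_k(P,s')$, but now a simple swap of $s$ and $s'$ need not preserve the subgee condition because moving an element across block boundaries changes $\theta$. So instead I would count both sides directly. For $S\in\mathcal{S}_k(P,s)$ we have $P\subseteq S$, $|S|=k$, and $S\setminus P$ is a single element; thus $|\mathcal{S}_k(P,s)|$ is precisely the number of $s''\notin P\cup\{s\}$ with $s''\in\mathrm{supp}(P)$ such that $P\cup\{s''\}$ remains a subgee. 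In other words, $|\mathcal{S}_k(P,s)|=|\mathrm{supp}(P)\setminus\{s\}|=|\mathrm{supp}(P)|-1$, and symmetrically $|\mathcal{S}_k(P,s')|=|\mathrm{supp}(P)|-1$. This gives equality of cardinalities directly, without exhibiting a bijection that respects blocks.

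\smallskip

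The main point to verify carefully, and the step I expect to be the real obstacle, is the claim that when $|P|=k-1$ every size-$k$ subgee containing $P$ is obtained by adjoining a single element of $\mathrm{supp}(P)$, i.e.\ that $\mathcal{S}_k(P,s)$ is in bijection with $\mathrm{supp}(P)\setminus\{s\}$ via $S\mapsto S\setminus P$. This requires knowing that $P\cup\{s''\}$ being a subgee is equivalent to $s''\in\mathrm{supp}(P)$, which is exactly the definition of the support set, so the count $|\mathcal{S}_k(P,s)|=|\mathrm{supp}(P)|-1$ holds provided $s\in\mathrm{supp}(P)$ itself (so that $s$ is excluded from the $k-1$ remaining candidates). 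Since both $s,s'\in\mathrm{supp}(P)$ by hypothesis, both counts equal $|\mathrm{supp}(P)|-1$, and the proposition follows. The delicate part is ensuring no size-$k$ superset of $P$ arises by adding an element outside $\mathrm{supp}(P)$; this is precisely guarded by the downward-closure of the subgee property under the partial order, which forces any subgee containing $P$ of one larger size to add an element keeping the profile in $S_k$, i.e.\ an element of $\mathrm{supp}(P)$.
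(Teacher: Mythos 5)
Your proof is correct and is essentially the paper's own argument: since $|P|=k-1$, every member of $\mathcal{S}_k(P,s)$ has the form $P\cup\{x\}$ with $x\in\mathrm{supp}(P)\setminus\{s\}$, so both sides equal $|\mathrm{supp}(P)|-1$; the paper expresses the very same observation as the set identity $\mathcal{S}_k(P,s)\setminus\{P\cup\{s'\}\}=\mathcal{S}_k(P,s')\setminus\{P\cup\{s\}\}$. The block/profile considerations in your first paragraph are unnecessary, since (as you note at the end) the whole statement follows from the definition of $\mathrm{supp}(P)$ alone.
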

\begin{proof}
We observe that both $P\cup \{s\}$ and $P\cup \{s'\}$ are subgees of cardinality $k$, and thus
\[\mathcal{S}_k(P,s)\setminus (P\cup \{s'\})=\mathcal{S}_k(P,s')\setminus (P\cup \{s\}).\] This proves the result.
\end{proof}

In \cite[Corollary 1.6]{cohomologyclasses}, the author proved that for the monogenic code $\langle \{g_1,g_2,\dots,g_k,n\}\rangle$ the image of $R^{m-|J|}V_J$ under $\phi$  (the Poincare duality isomorphism) is $1$  whenever the subgee $J$ is of cardinality $k$. Observe that the same proof works for any genetic code whenever $J$ is of maximum cardinality. Here we reproduce the proof in our case for completeness.

\begin{theorem}\label{thm:valueofmaxunderPD}
Let $\langle \{g_1,\dots,g_k,n\},\{b,n\}\rangle$ be the genetic code of a length vector $\alpha$ and $J$ be a subgee of cardinality $k$. Then $\phi(R^{m-|J|}V_J) =1$.
\end{theorem}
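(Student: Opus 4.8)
The plan is to reduce the statement to a nonvanishing claim and then settle it by Poincaré duality, using the maximality of $J$ as the only essential ingredient --- which is precisely why the argument is insensitive to the ambient genetic code. Since $\mbalpha$ is a closed connected manifold of dimension $m=n-3$, the group $H^m(\mbalpha;\Z_2)$ is isomorphic to $\Z_2$ and $\phi$ is this isomorphism; hence proving $\phi(R^{m-|J|}V_J)=1$ is equivalent to proving $R^{m-k}V_J\neq 0$, where $k=|J|$ is the maximal size of a subgee.

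The key computation I would isolate first is a multiplicative one. Every class in $H^{m-k}(\mbalpha;\Z_2)$ is a $\Z_2$-combination of monomials $R^{m-k-|L|}V_L$ with $L$ a subgee: relation (R1) lets one reduce every squared generator via $V_i^2=RV_i$, and relation (R2) kills all non-subgee monomials. For such a monomial I would show that
\[\big(R^{m-k-|L|}V_L\big)\cdot V_J=\begin{cases} R^{m-k}V_J & \text{if } L\subseteq J,\\ 0 & \text{if } L\not\subseteq J.\end{cases}\]
When $L\subseteq J$ this follows from $V_LV_J=R^{|L|}V_J$ (again by (R1)), so the $R$-powers recombine to give $R^{m-k}V_J$. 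When $L\not\subseteq J$ there is an index $i_0\in L\setminus J$, so $V_LV_J$ carries the factor $V_{J\cup\{i_0\}}$; since $|J\cup\{i_0\}|=k+1$ exceeds the size of the largest gee, $J\cup\{i_0\}$ is not a subgee and (R2) forces the product to vanish. This is the step where maximality of $J$ is used, and it makes no reference to the second gene $\{b,n\}$, which explains why the monogenic proof transfers verbatim.

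With this in hand I would invoke Poincaré duality: for the closed connected $m$-manifold $\mbalpha$ the cup-product pairing $H^{k}(\mbalpha;\Z_2)\times H^{m-k}(\mbalpha;\Z_2)\to H^m(\mbalpha;\Z_2)\cong\Z_2$ is nondegenerate. Because $J$ is a subgee, $V_J$ is a nonzero class in $H^k(\mbalpha;\Z_2)$ (the subgee monomials are basis elements in the Hausmann--Knutson presentation of \Cref{hausmann-knutson}), so the linear functional $\beta\mapsto\phi(\beta\,V_J)$ on $H^{m-k}(\mbalpha;\Z_2)$ is not identically zero. Evaluating it on the spanning monomials $R^{m-k-|L|}V_L$, the previous step shows that every value lies in $\{0,\ \phi(R^{m-k}V_J)\}$. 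A nonzero functional must take the value $1$ somewhere, and therefore $\phi(R^{m-k}V_J)=\phi(R^{m-|J|}V_J)=1$.

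The routine points to pin down are that $\mbalpha$ is closed and connected (so Poincaré duality applies and $H^m\cong\Z_2$), that $V_J\neq 0$ for a subgee $J$ (which I would quote from Hausmann--Knutson), and the spanning statement for $H^{m-k}$ in terms of subgee monomials (immediate from (R1) and (R2)). The conceptual heart, and the only place any genuine choice is made, is the orthogonality computation of the middle paragraph: maximality of $J$ annihilates $V_L V_J$ for every $L\not\subseteq J$, collapsing the pairing of $V_J$ against all of $H^{m-k}$ onto the single value $\phi(R^{m-k}V_J)$. I do not expect a serious obstacle beyond verifying $V_J\neq0$; the main subtlety is simply phrasing the nondegeneracy argument so that this single surviving value is forced to equal $1$ rather than merely being well defined.
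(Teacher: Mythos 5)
Your proof is correct and takes essentially the same route as the paper's: both arguments combine Poincar\'e duality (to get a class pairing nontrivially with $V_J$) with the maximality of $J$ and relation (R2) (to annihilate every monomial containing a factor $V_t$ with $t\notin J$) and relation (R1) (to collapse each surviving monomial to $R^{m-|J|}V_J$), thereby forcing $\phi(R^{m-|J|}V_J)=1$. The only difference is presentational --- the paper picks a specific dual class $X$, decomposes it into monomials and argues that an odd number of them survive, whereas you phrase the identical computation as the nonvanishing of the linear functional $\beta\mapsto\phi(\beta V_J)$ evaluated on the spanning set of subgee monomials.
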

\begin{proof}
Note that the product $V_J$ is non-zero since it is non-zero in the quotient $H^*(\mbalpha; \Z_2)/\langle R\rangle$, which is an exterior face ring (see \cite[Corollary 1.6]{cohomologyclasses}).
By Poincar\'e duality, there must be an  $X=\sum_{i} X_i\in H^{m-|J|}(\mbalpha; \mathbb{Z}_2)$ such that $X\cdot V_J=1$ and thus $\phi(X\cdot V_J)=1$. 
Since $J$ is maximal, if $X_i$ contains a factor $V_t$ such that $t\notin J$, then $X_i\cdot V_J=0$.  Now using the relation $(R1)$ of \Cref{hausmann-knutson}, we can replace $V_t^2$ by $RV_t$, whenever $t\in J$. 
Therefore if $X_i\cdot V_J\neq 0$, then each $X_i$ can be replaced by $R^{m-|J|}$.  Since $\phi(X\cdot V_J)=1$, number of such $X_i$'s must be odd. This proves the theorem.
\end{proof}

Let $m=n-3$ be the dimension of $\malpha$. For a generic length vector $\alpha=(\alpha_1,\dots,\alpha_n)$, the following result is an important tool to compute $R^{m}$.
\begin{lemma}\label{implemma}
Let $P$ be a nonempty subgee and $s\in\mathrm{supp}(P)$. Then \[R^{m-|P|} V_P=\sum_{|P|+1\leq t\leq k}\left(\sum_{S \in \cs_t(P,s)}R^{m-|S|}V_S \right).\]

\end{lemma}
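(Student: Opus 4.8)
The plan is to recast the claimed identity as a single vanishing relation in $H^{*}(\mbalpha;\Z_2)$. Moving the summand $R^{m-|P|}V_P$ across the equality (harmless over $\Z_2$) and reading it as the $S=P$ term, the asserted formula is equivalent to
\[ \sum_{\substack{S\ \text{a subgee}\\ P\subseteq S,\ s\notin S}} R^{\,m-|S|}V_S \;=\; 0, \]
which I will call $(\star)$. Indeed, a nonempty subgee can only be enlarged inside a gee of the first type, so every subgee $S\supsetneq P$ satisfies $|P|<|S|\le k$; hence $(\star)$ unwinds to precisely the displayed double sum over $|P|+1\le t\le k$ and $S\in\cs_t(P,s)$. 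It therefore suffices to derive $(\star)$ from the Hausmann--Knutson presentation in \Cref{hausmann-knutson}.

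Two preliminary observations set up the computation. First, since $s\in\mathrm{supp}(P)$, the set $P\cup\{s\}$ is itself a subgee, of cardinality $|P|+1$. Second, the family of subgees is closed under passing to subsets: $A\subseteq B$ forces $A\le B$ in the defining partial order, so any subset of a subgee is again a subgee. I will use this down-closure repeatedly.

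The core step is to apply relation (R3) of \Cref{hausmann-knutson} to the subgee $S=P\cup\{s\}$ at the \emph{minimal} admissible degree $d=m-|P|$; this level is legitimate precisely because $|P\cup\{s\}|=|P|+1=n-d-2$. It yields
\[ \sum_{\substack{T\ \text{a subgee}\\ T\cap(P\cup\{s\})=\emptyset}} R^{\,m-|P|-|T|}\,V_T \;=\; 0. \]
Multiplying through by $V_P$ is where the choice of $d$ pays off. Each such $T$ is disjoint from $P$, so no relation (R1) squares are created and $V_TV_P=V_{T\cup P}$, while relation (R2) kills every summand for which $T\cup P$ fails to be a subgee. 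Because $|T\cup P|=|T|+|P|$, the exponent $m-|P|-|T|$ simplifies to exactly $m-|T\cup P|$, so each surviving term is $R^{\,m-|T\cup P|}V_{T\cup P}$. Finally, $T\mapsto U:=T\cup P$ is a bijection from these $T$ onto the subgees $U$ with $P\subseteq U$ and $s\notin U$, the inverse $U\mapsto U\setminus P$ landing among admissible $T$ by the down-closure noted above. Re-indexing the sum by $U$ produces $(\star)$, completing the argument.

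The point demanding the most care, and the main obstacle, is the bookkeeping of the power of $R$. Running (R3) at the naive top degree $d=m$ and then multiplying by $V_P$ only produces terms of degree $m+|P|$, which vanish for dimensional reasons and yield nothing useful; it is essential to invoke (R3) at the minimal degree $d=m-|P|$ so that the surplus factor $R^{|P|}$ never appears and the exponents collapse cleanly to $m-|U|$. Verifying that this degree is admissible for $P\cup\{s\}$, that the correspondence $T\leftrightarrow T\cup P$ is genuinely a bijection onto the desired index set, and that (R2) removes exactly the non-subgee terms are the steps I would check carefully; everything else is formal manipulation over $\Z_2$.
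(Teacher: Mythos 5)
Your proof is correct, and it takes a genuinely different route from the one in the paper. You use relation (R3) of \Cref{hausmann-knutson} exactly once, at the minimal admissible degree $d=m-|P|$ applied to the subgee $P\cup\{s\}$ (admissible since $|P\cup\{s\}|=|P|+1=n-d-2$), and then multiply by the monomial $V_P$: disjointness ensures no (R1)-squares appear, (R2) kills exactly the terms where $T\cup P$ is not a subgee, the exponent collapses to $m-|T\cup P|$, and $T\mapsto T\cup P$ is a bijection onto the subgees $U$ with $P\subseteq U$ and $s\notin U$, with inverse $U\mapsto U\setminus P$ justified by down-closure of subgees under inclusion; your relation $(\star)$ then unwinds to the stated double sum because every subgee properly containing the nonempty $P$ has cardinality between $|P|+1$ and $k$. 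By contrast, the paper never leaves the top degree and never multiplies: it takes the $2^{|P|}$ relations $E_A$ of \Cref{EqR3} (i.e., (R3) at $d=m$) for all subsets $A\subseteq P\cup\{s\}$ containing $s$, and combines them purely additively along a binary tree, using \Cref{rem:obs1} and \Cref{rem:obs2} to propagate sums from the leaves up to the root, where the claimed identity appears. What your argument buys is brevity and transparency of the index set $\{S:\ P\subseteq S,\ s\notin S\}$; what it costs is reliance on the multiplicative structure of the ring ((R2) together with multiplication by $V_P$) and on (R3) at a non-top degree, whereas the paper's scheme only ever cites (R3) with $d=m$. The one point in your write-up deserving an explicit sentence is well-formedness of the relation at $d=m-|P|$, namely that $|T|\le m-|P|$ for every subgee $T$ disjoint from $P\cup\{s\}$; this is automatic (for disjoint subgees $T$ and $P$, shortness of $T\cup\{n\}$ and $P\cup\{n\}$ together with $\alpha_n$ being the largest side forces $|T|+|P|\le n-3=m$), but it should be recorded.
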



Before we prove \Cref{implemma}, we make a few observations which play a critical role in its proof.
For a nonempty subgee $P$, let $E_P$ denotes the equation obtained from the relation $(R3)$ of \Cref{hausmann-knutson}, {\it i.e.}, 
\begin{equation}\label{EqR3}
    E_P: \sum_{T\cap P=\emptyset} R^{m - |T|}V_T=0.
\end{equation}
\begin{remark}\label{rem:obs1}
    If $P\neq \emptyset$ and $s\in \mathrm{supp}(P)$, then  adding $E_P$ and $E_{P\cup \{s\}}$, we get the following equation
    \begin{equation}
        E_P+E_{P\cup \{s\}}: \sum_{T\cap P=\emptyset,~ s\in T} R^{m - |T|}V_T=0.
    \end{equation}

\end{remark}

\begin{remark}\label{rem:obs2}
    Let $|P|\geq 2$, $s'\in P$ and $s\in \mathrm{supp}(P)$. 
    Then adding $E_P+E_{P\cup \{s\}}$ and $E_{P\setminus \{s'\}}+E_{(P\setminus \{s'\})\cup \{s\}}$, we get the following 
    \begin{equation}
        R^{m-2}V_{\{s,s'\}}=\sum_{\substack{T\cap (P\setminus\{s'\})=\emptyset\\ \{s,s'\}\subsetneq T}} R^{m - |T|}V_T.
    \end{equation}
\end{remark}

\begin{proof}[Proof of \Cref{implemma}]
    We prove this using a binary tree representation. Given a subgee $P = \{i_1, i_2, \dots, i_k\}$ and $s\in \mathrm{supp}(P)$, we construct a binary tree $B_k$ of height $k$ as follows: For $r \in \{0, 1, \dots k-1\}$, all the vertices of $B_k$ at depth $r$ are named as ${i_r}$. To label the vertices at depth $k$, {\it i.e.} the leaf vertices, we proceed as follows: Since $B_k$ is a binary tree, each non-leaf vertex has two children, {\it i.e.}, a left child and a right child. For any leaf vertex, there is a unique path that connects the root vertex to this leaf vertex. Consider this path, and take the set $\{s\}$. We start moving on this path starting from the root vertex. If on the path, we move to the left child from a vertex $v$, then we include $v$ in the set; if we move to the right child, then we do not include  the vertex  $v$ (see \Cref{fig:Bk} for the illustration when $k=3$.).
    
   	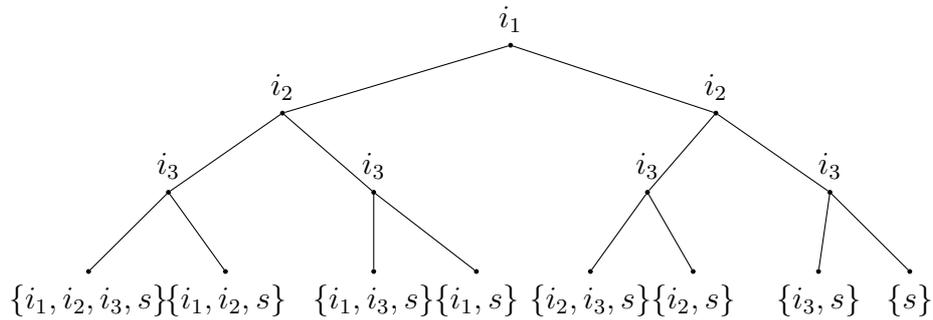
\begin{figure}[H]
	\centering
		\begin{tikzpicture}
			[scale=0.30, vertices/.style={draw, fill=black, circle, inner
				sep=0.5pt}]

			\node[vertices, label=above:{{$i_1$}}] (i1) at (2,10) {};
			
			\node[vertices,label=above:{{$i_2$}}] (i21) at (-8, 7) {};
			\node[vertices,label=above:{{$i_2$}}] (i22) at (11, 7) {};
			
			\node[vertices,label=above:{{$i_3$}}] (i31) at (-13, 3.5) {};
			\node[vertices,label=above:{{$i_3$}}] (i32) at (-4, 3.5) {};
			\node[vertices,label=above:{{$i_3$}}] (i33) at (8, 3.5) {};
			\node[vertices,label=above:{{$i_3$}}] (i34) at (16, 3.5) {};

			\node[vertices,label=below:{{$\{i_1, i_2, i_3,s \}$}}] (i311) at (-16.5, 0) {};
			
		    \node[vertices,label=below:{{$\{i_1, i_2 ,s\}$}}] (i312) at (-10.5, 0) {};
		    
		    \node[vertices,label=below:{{$\{i_1, i_3 ,s\} $}}] (i321) at (-4, 0) {};

		    \node[vertices,label=below:{{$\{i_1,s\} $}}] (i322) at (0.5, 0) {};
		    
		    \node[vertices,label=below:{{$\{i_2, i_3,s\}$}}] (i331) at (5.5, 0) {};

		    \node[vertices,label=below:{{$\{i_2,s\}$}}] (i332) at (10, 0) {};

			\node[vertices,label=below:{{$\{i_3,s\}$}}] (i341) at (15.5, 0) {};

			\node[vertices,label=below:{{$\{s\}$}}] (i342) at (19.5, 0) {};

			\foreach \to/\from in
			{i1/i21, i1/i22, i21/i31, i21/i32, i22/i33, i22/i34, i31/i311, i31/i312, i32/i321, i32/i322, i33/i331, i33/i332, i34/i341, i34/i342} \draw [-] (\to)--(\from);
			
			\end{tikzpicture}
			\vspace{0.5cm}
			\caption{Tree $B_3$} \label{fig:Bk}

    \end{figure}
    
   Since $s \in \mathrm{supp}(P)$, $P \cup \{s\}$ is a subgee, and hence every subset of it is also a subgee.
   We now associate an equation to every vertex of $B_k$ starting from the leaf vertices. If a leaf vertex corresponds to a subset, say $A$ of $P \cup \{s\}$, then the equation associated with the leaf $A$ is $E_A$ (see \Cref{EqR3}).
    We now associate equations to the parents of leaves by adding the equation associated with their children. We do this recursively in the decreasing order of the depth of the vertices by using \Cref{rem:obs1} and \Cref{rem:obs2}. Finally, we see that the equation corresponding to the root vertex is
\[R^{m-|P|} V_P=\sum_{|P|+1\leq t\leq k}\left(\sum_{S \in \cs_t(P, s)}R^{m-|S|}V_S \right).\]   
\end{proof}


\begin{corollary}\label{Rcorr}
If $P\in \mathcal{S}_{k-1}$ and $s,s'\in \mathrm{supp}(P)$, then $R^{m-k}V_{P\cup \{s\}}=R^{m-k}V_{P\cup \{s'\}}$.
\end{corollary}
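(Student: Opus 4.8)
The plan is to feed $P$ into Lemma~\ref{implemma} twice---once with the support element $s$ and once with $s'$---and then compare the two resulting expansions of $R^{m-|P|}V_P$. The key simplification is that $P\in\mathcal{S}_{k-1}$ forces $|P|=k-1$, so the outer sum over $t$ in Lemma~\ref{implemma} runs only over the single value $t=k$. Consequently the lemma collapses to
\[
R^{m-k+1}V_P=\sum_{S\in\mathcal{S}_k(P,s)}R^{m-k}V_S
\qquad\text{and}\qquad
R^{m-k+1}V_P=\sum_{S\in\mathcal{S}_k(P,s')}R^{m-k}V_S,
\]
and since the left-hand sides agree, the two sums on the right must be equal in $H^{m}(\mbalpha;\Z_2)$.

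Next I would pin down exactly how the index sets $\mathcal{S}_k(P,s)$ and $\mathcal{S}_k(P,s')$ differ. Because $|P|=k-1$ and every member of either set has cardinality $k$ and contains $P$, each such member has the form $P\cup\{x\}$ with $x\in\mathrm{supp}(P)$; the only remaining constraint is $x\neq s$ (respectively $x\neq s'$). Hence $\mathcal{S}_k(P,s)$ contains $P\cup\{s'\}$ but not $P\cup\{s\}$, the set $\mathcal{S}_k(P,s')$ contains $P\cup\{s\}$ but not $P\cup\{s'\}$, and every other member $P\cup\{x\}$ with $x\in\mathrm{supp}(P)\setminus\{s,s'\}$ lies in both. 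This is precisely the symmetric-difference identity already recorded in the proof of Proposition~\ref{SkPs}.

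Finally I would subtract the two equal sums. Working over $\Z_2$, every common term $R^{m-k}V_{P\cup\{x\}}$ with $x\neq s,s'$ cancels, leaving exactly
\[
R^{m-k}V_{P\cup\{s'\}}=R^{m-k}V_{P\cup\{s\}},
\]
which is the assertion. I do not expect any genuine obstacle: once Lemma~\ref{implemma} is available the argument is a bookkeeping comparison of two sums, and the only point that truly needs checking is that the outer index collapses to $t=k$ when $|P|=k-1$, so that no subgees of cardinality exceeding $k$ enter either expansion.
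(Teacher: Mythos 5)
Your argument is correct and is essentially the paper's own proof: the paper likewise derives the corollary by applying \Cref{implemma} (which collapses to the single $t=k$ term since $|P|=k-1$) and cancelling the common elements of $\mathcal{S}_k(P,s)$ and $\mathcal{S}_k(P,s')$ over $\Z_2$, which is exactly your symmetric-difference bookkeeping. The only difference is that you spell out the cancellation explicitly, whereas the paper records it as the set identity $\mathcal{S}_k(P,s)\setminus (P\cup \{s'\})=\mathcal{S}_k(P,s')\setminus (P\cup \{s\})$.
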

\begin{proof}
The proof follows from \Cref{implemma} and the following observation \[\mathcal{S}_k(P,s)\setminus (P\cup \{s'\})=\mathcal{S}_k(P,s')\setminus (P\cup \{s\}).\] 
\end{proof}

\begin{proposition}\label{RViRVj}
If $S,S'\in \mathcal{S}_k$, then $R^{m-k}V_S=R^{m-k}V_{S'}$.
\end{proposition}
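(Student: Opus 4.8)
The plan is to prove the statement by a connectivity argument on a suitable \emph{exchange graph} built from $\mathcal{S}_k$, using \Cref{Rcorr} as the elementary local move. First I would record the combinatorial description of $\mathcal{S}_k$. For $k\geq 2$, the only gee of size $\geq 2$ is $\{g_1,\dots,g_k\}$ (the other gee $\{b\}$ is a singleton), so every $k$-element subgee must be dominated by $\{g_1,\dots,g_k\}$. Hence $\mathcal{S}_k$ consists exactly of the $k$-subsets $S=\{s_1<\dots<s_k\}$ of $[n-1]$ with $s_i\leq g_i$ for all $i$; in particular $\{1,2,\dots,k\}$ is the unique minimal element of $\mathcal{S}_k$.

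Next I would introduce a graph $\Gamma$ on the vertex set $\mathcal{S}_k$ in which $S$ and $S'$ are joined by an edge whenever $|S\cap S'|=k-1$ and their common $(k-1)$-subset $P$ satisfies $S=P\cup\{s\}$ and $S'=P\cup\{s'\}$ with $s,s'\in\mathrm{supp}(P)$. By \Cref{Rcorr}, every edge of $\Gamma$ yields $R^{m-k}V_S=R^{m-k}V_{S'}$, so it suffices to prove that $\Gamma$ is connected.

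To establish connectivity I would exhibit, for each $S\neq\{1,\dots,k\}$, a neighbour $S'$ with strictly smaller $\sum_i s_i$; this descent then forces every vertex to be joined to the unique minimum $\{1,\dots,k\}$. The move is as follows: let $j$ be the least index with $s_j>j$, so that $s_i=i$ for all $i<j$, and set $P=S\setminus\{s_j\}$ and $S'=P\cup\{j\}$. One checks that $j\notin S$, that $S'$ is again dominated by $\{g_1,\dots,g_k\}$ (the entries below position $j$ become $1,\dots,j$ and those above position $j$ are unchanged), and that $P\in\mathcal{S}_{k-1}$ with both $s_j$ and $j$ lying in $\mathrm{supp}(P)$. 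Thus $S'\in\mathcal{S}_k$ is a $\Gamma$-neighbour of $S$ and $\sum S'=\sum S-(s_j-j)<\sum S$. Iterating produces a path from $S$ to $\{1,\dots,k\}$, which proves connectivity and hence the proposition.

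I expect the only real content to be the verification that the descent move stays inside $\mathcal{S}_k$ and realises a genuine edge of $\Gamma$, namely that both $s_j$ and $j$ belong to $\mathrm{supp}(P)$; everything else is bookkeeping. The case $k=1$ is degenerate and handled separately: there $P=\emptyset$, the set $\mathrm{supp}(\emptyset)$ is precisely the collection of all singleton subgees, and a single application of \Cref{Rcorr} already gives $R^{m-1}V_S=R^{m-1}V_{S'}$ for all $S,S'\in\mathcal{S}_1$.
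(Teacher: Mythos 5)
Your proof is correct and takes essentially the same route as the paper: both arguments consist of iterated applications of \Cref{Rcorr} along a chain of single-element exchanges inside $\mathcal{S}_k$, the paper normalizing an arbitrary $S$ upward to the gee $\{g_1,\dots,g_k\}$ while you normalize downward to the minimum $\{1,\dots,k\}$ (with the same implicit reliance, for $k=1$, on \Cref{Rcorr} applied to $P=\emptyset$). If anything, your descent step is slightly more careful: your incoming element $j$ is guaranteed not to lie in $S$, whereas the paper's intermediate sets $\{g_1,\dots,g_{j-1},i_j,i_{j+1},\dots,i_k\}$ can degenerate when $i_j$ equals some $g_l$ with $l<j$, an edge case the paper's chain glosses over.
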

\begin{proof}
Let $T$ be the gee $\{g_1,\dots,g_k\}$. We will show that for any $S\in \mathcal{S}_k$, $R^{m-k}V_S=R^{m-k}V_T$. Let $S=\{i_1,\dots,i_k\}$ be a subgee of size $k$, where $i_1<\dots<i_k$. 
It follows from Corollary \ref{Rcorr} that $R^{m-k}V_T=R^{m-k}V_{\{g_1,\dots,g_{k-1},i_k\}}.$
Similar argument implies that  $R^{m-k}V_{\{g_1,\dots,g_{k-1},i_k\}}=R^{m-k}V_{\{g_1,\dots,g_{k-2},i_{k-1}, i_k\}}.$
By continuing this way, we get \[R^{m-k}V_{\{g_1,i_2,\dots,i_k\}}=R^{m-k}V_S.\]
Therefore, for any two subgees of size $k$, the above equation holds thereby proving the result.
\end{proof}

\begin{lemma}\label{lemmasubgee}
Let $P$ be a subgee with $|P|=r$ and $s,s'\in \mathrm{supp}(P)$. If $r\leq k-1$, then \[R^{m-r-1}V_{P\cup \{s\}}=R^{m-r-1}V_{P\cup \{s'\}},\] whenever $s$ and $s'$ are in the same block.
\end{lemma}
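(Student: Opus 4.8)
The plan is to prove the identity by downward induction on $r$, the essential input being two applications of \Cref{implemma} to $P$ itself. Write $m=n-3$ and assume $P\neq\emptyset$, so that \Cref{implemma} applies. Applying it to the subgee $P$ once with the element $s$ and once with $s'$ (both lie in $\mathrm{supp}(P)$), each expansion equals $R^{m-r}V_P$; adding the two identities over $\Z_2$ cancels the common left-hand side and gives
\[0=\sum_{t=r+1}^{k}\Bigl(\sum_{S\in\cs_t(P,s)}R^{m-t}V_S+\sum_{S\in\cs_t(P,s')}R^{m-t}V_S\Bigr).\]
Since we work in characteristic $2$, the terms indexed by sets lying in both families cancel, and each combined sum collapses to a sum over the symmetric difference, i.e.\ over subgees $S\supseteq P$ of size $t$ containing exactly one of $s,s'$. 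Isolating the lowest degree $t=r+1$, where the only such sets are $P\cup\{s\}$ and $P\cup\{s'\}$, yields the key relation
\[R^{m-r-1}V_{P\cup\{s\}}+R^{m-r-1}V_{P\cup\{s'\}}=\sum_{t=r+2}^{k}\ \sum_{\substack{S\supseteq P,\ |S|=t\\ |S\cap\{s,s'\}|=1}}R^{m-t}V_S.\]

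The induction runs on $r$ decreasing from $k-1$. For the base case $r=k-1$ the right-hand side is an empty sum, so the claim is immediate; indeed this is exactly \Cref{Rcorr}. For the inductive step ($r\le k-2$) I would show the right-hand side vanishes by a pairing argument. Group the summands by $A:=S\setminus\{s,s'\}$, so that $P\subseteq A$, $r+1\le|A|=t-1\le k-1$, and each summand has the shape $R^{m-t}V_{A\cup\{s\}}$ or $R^{m-t}V_{A\cup\{s'\}}$. The goal is to check that these occur in canceling pairs.

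The main obstacle, and the only place the same-block hypothesis is used, is precisely this pairing. Two facts are needed. First, one must see that $A\cup\{s\}$ is a subgee if and only if $A\cup\{s'\}$ is: since $P\neq\emptyset$, any element of $\mathrm{supp}(P)$ lying in $\mathrm{supp}(P)$ together with a set of size $\ge 2$ must belong to one of the first $k$ blocks (a subgee of size $\ge 2$ cannot meet $(g_k,b]$, as such a set is necessarily dominated by $\{g_1,\dots,g_k\}$), so $s,s'\le g_k$; as $s,s'$ share a block, $\theta(A\cup\{s\})=\theta(A\cup\{s'\})$, and because a set of size $\ge 2$ is a subgee exactly when its block vector lies in $S_k$, the two sets are subgees simultaneously. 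Hence the terms $R^{m-t}V_{A\cup\{s\}}$ and $R^{m-t}V_{A\cup\{s'\}}$ appear together. Second, since $|A|=t-1>r$, $s,s'\in\mathrm{supp}(A)$, and $s,s'$ lie in the same block, the induction hypothesis applies to $A$ and gives $R^{m-t}V_{A\cup\{s\}}=R^{m-t}V_{A\cup\{s'\}}$, so each such pair sums to $0$ over $\Z_2$. Every summand on the right-hand side is accounted for in this way, so the entire right-hand side vanishes, leaving $R^{m-r-1}V_{P\cup\{s\}}=R^{m-r-1}V_{P\cup\{s'\}}$ and completing the induction.
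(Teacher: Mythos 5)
Your proposal is correct and takes essentially the same route as the paper's proof: downward induction on $|P|$ with base case \Cref{Rcorr}, equating the two expansions of $R^{m-r}V_P$ from \Cref{implemma} applied with $s$ and with $s'$, cancelling the degree-$t\ge r+2$ terms by the same-block swap $A\cup\{s\}\leftrightarrow A\cup\{s'\}$ together with the induction hypothesis, and then extracting the two remaining $t=r+1$ terms. Your inline verifications (that the swap preserves the subgee property and that the size-$(r+1)$ leftovers are exactly $P\cup\{s\}$ and $P\cup\{s'\}$) are precisely what the paper delegates to \Cref{StPs} and \Cref{SkPs}.
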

\begin{proof}
We prove this in decreasing order of $r$. From  Corollary \ref{Rcorr}, the result holds for $r=k-1$. 
Let $|P|=r$ and result holds for all subgees of cardinality bigger than $r$. Let $s,s'\in \mathrm{supp}(P)$.  Using \Cref{implemma}, we get  

\[\sum_{r+1\leq i\leq k}\left(\sum_{S\in \mathcal{S}_i(P,s)}R^{m-i}V_S \right)=\sum_{r+1\leq j\leq k}\left(\sum_{S\in \mathcal{S}_j(P,s')}R^{m-j}V_S \right).\]
Using induction and \Cref{StPs}, we get  \[\sum_{S\in \mathcal{S}_{r+1}(P,s)}R^{m-t-1}V_S=\sum_{S\in \mathcal{S}_{r+1}(P,s')}R^{m-t-1}V_S.\]
The result now follows from \Cref{SkPs}.
\end{proof}



\begin{proposition}
Let $G=\langle\{g_1,\dots,g_k\},\{b,n\}\rangle$ and $G'=\langle\{g_1,\dots,g_k\}\rangle$ be the genetic codes of $\alpha$ and $\beta$, respectively.
Let $P$ be a nonempty subgee such that ${supp}(P)\neq \emptyset$. 
Then the value of $R^{m-|P|}V_{P}$ under the corresponding Poincare duality isomorphisms from $H^{m}(\mbalpha, \Z_2)$ and $H^{m}(\overline{\mathrm{M}}_{\beta},\Z_2)$ remains same.
\end{proposition}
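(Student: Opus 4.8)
The plan is to compare the two cohomology rings generator by generator, exploiting that the genetic codes $G$ and $G'$ produce identical families of subgees in every cardinality except $1$. A subset of $[n-1]$ of cardinality at least $2$ is a subgee for $G$ precisely when it is dominated by the gee $\{g_1,\dots,g_k\}$ (domination by $\{b\}$ is impossible once the cardinality exceeds $1$), and the same description holds for $G'$; the only discrepancy is that for $G$ the singletons $\{j\}$ with $g_k<j\le b$ are subgees, whereas for $G'$ they are not. First I would record that the hypothesis $\mathrm{supp}(P)\neq\emptyset$ forces $P$ to be one of the \emph{common} subgees, namely $P\le\{g_1,\dots,g_k\}$ with $1\le|P|\le k-1$: a singleton $\{j\}$ with $j>g_k$ has empty support since no $\{j,s\}$ is a subgee, and a subgee of cardinality $k$ also has empty support. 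Since $P\le\{g_1,\dots,g_k\}$ forces $P\subseteq[g_k]$, the extra singletons of $G$ are disjoint from $P$, and $\mathrm{supp}(P)$ is the \emph{same} set whether computed for $G$ or for $G'$.

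Then I would argue by induction on $k-|P|$, applying \Cref{implemma} in each of the two rings with one fixed $s\in\mathrm{supp}(P)$ common to both. Because $|P|\ge 1$, the right-hand side of \Cref{implemma},
\[R^{m-|P|}V_P=\sum_{|P|+1\le t\le k}\ \sum_{S\in\cs_t(P,s)}R^{m-|S|}V_S,\]
involves only subgees $S$ with $|S|\ge 2$, and by the observation above the index sets $\cs_t(P,s)$ are literally identical for $G$ and $G'$. Hence this is the very same relation in $H^m(\mbalpha,\Z_2)$ and in $H^m(\overline{\mathrm{M}}_{\beta},\Z_2)$. Writing $\phi_\alpha,\phi_\beta$ for the two Poincar\'e duality isomorphisms and summing, it then suffices to compare $\phi_\alpha(R^{m-|S|}V_S)$ with $\phi_\beta(R^{m-|S|}V_S)$ over the common index set. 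The terms with $|S|=k$ anchor the induction: by \Cref{thm:valueofmaxunderPD} each equals $1$ in \emph{both} rings. For the terms with $|S|<k$, each such $S$ is again a common subgee with nonempty support, so the induction hypothesis applies and the two values agree. Summing these identical contributions yields $\phi_\alpha(R^{m-|P|}V_P)=\phi_\beta(R^{m-|P|}V_P)$.

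The step that requires care, and which I expect to be the main obstacle, is justifying that every subgee of cardinality strictly less than $k$ has nonempty support, so that the reduction never stalls before reaching cardinality $k$, together with the assurance that the extra singletons of $G$ never enter the computation. Both points resolve cleanly once one notes that any subset of a subgee is again a subgee (removing an element only makes domination easier), so a proper subgee $S\subseteq[g_k]$ of cardinality $r<k$ can be enlarged: adjoining to $S$ the smallest positive integer $j\notin S$ keeps it dominated by $\{g_1,\dots,g_k\}$, since such $j$ satisfies $j\le r+1\le g_k$ and the inequalities $g_i\ge i$ supply the needed room. This produces an element of $\mathrm{supp}(S)$ belonging to both codes, so the entire reduction stays among the common subgees of cardinality $\ge 2$, and the singletons $\{j\}$ with $g_k<j\le b$ play no role. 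These singletons are exactly the terms responsible for the correction $(b-g_k)$ in \Cref{powerR}, which is visible only in the excluded case $P=\emptyset$; for nonempty $P$ they cancel out of \Cref{implemma}, and the two Poincar\'e-dual values coincide as claimed.
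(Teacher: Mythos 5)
Your proof is correct and follows essentially the same route as the paper's: both reduce $R^{m-|P|}V_P$ via \Cref{implemma} to subgees of cardinality at least $2$, which coincide for $G$ and $G'$, and anchor the downward recursion at cardinality $k$ using \Cref{thm:valueofmaxunderPD}. The paper compresses this into a few lines, while you additionally verify the points it leaves implicit (nonempty support at intermediate stages, coincidence of supports, and cancellation of the extra singletons $\{j\}$ with $g_k<j\le b$), which fleshes out the same argument rather than replacing it.
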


\begin{proof}
Observe that if $|P|=k$, then the result follows from \Cref{thm:valueofmaxunderPD}. 
Now we assume that $|P|<k$. In this case, $P$ is dominated by $\{g_1,\dots,g_k\}$.
Note that the collections of subgees of cardinality greater than $1$ with respect to both $G$ and $G'$ coincides. Moreover, the equations in \Cref{implemma} also remain unchanged for both the genetic codes. Therefore, \Cref{thm:valueofmaxunderPD} implies the result.
\end{proof}

The following is now an immediate consequence of the above result and \Cref{constant}.

\begin{corollary}\label{corollary:constant}
Let $\langle \{g_1,\dots,g_k,n\},\{b,n\}\rangle$ be the genetic code of $\alpha$ and $a_i=g_{k+1-i}-g_{k-i}$ for $1\leq i \leq k$.
Let $P$ be a subgee of cardinality $r\in \{1,\dots,k-1\}$ dominated by $\{g_1,\dots,g_k\}$. Then
\[\phi(R^{m-r}V_{P})=\sum_{B}\prod_{i=1}^{k}\binom{a_i+b_i-2}{b_i},\]
where \[B\in\{(b_1,\dots,b_k)\in \Z_{\ge 0}^k: \sum_{i=1}^{k} b_i=k-r, \textit{ and } (b_1,\dots,b_k)+\theta(P)\in S_k\}.\]
\end{corollary}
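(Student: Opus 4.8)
The plan is to obtain \Cref{corollary:constant} by chaining together the two results that immediately precede it: the unlabeled Proposition, which asserts that the value of $R^{m-|P|}V_{P}$ under Poincar\'e duality does not see the second gene $\{b,n\}$, and Davis' formula (\Cref{constant}), which evaluates exactly this quantity for a \emph{monogenic} code. In other words, the strategy is to transport the computation from the two-gene space $\mbalpha$ (genetic code $G=\langle\{g_1,\dots,g_k,n\},\{b,n\}\rangle$) to the one-gene space $\overline{\mathrm{M}}_{\beta}$ (genetic code $G'=\langle\{g_1,\dots,g_k,n\}\rangle$), where the closed form is already available. No new homotopy-theoretic input is needed; the content is purely in lining up the hypotheses of the two cited statements.

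Concretely, I would first check that $P$ satisfies the hypotheses of the preceding Proposition. Since $|P|=r\geq 1$ the subgee $P$ is nonempty, and since $P$ is dominated by $\{g_1,\dots,g_k\}$ all its elements lie in $(0,g_k]$, so $\theta(P)\in S_k$ with $\theta(P)_{+}=r$. Once $\mathrm{supp}(P)\neq\emptyset$ is confirmed (see below), the Proposition gives
\[
\phi_G\bigl(R^{m-r}V_P\bigr)=\phi_{G'}\bigl(R^{m-r}V_P\bigr),
\]
where $\phi_G,\phi_{G'}$ denote the respective Poincar\'e-duality isomorphisms. Next, since $G'$ is the single gene $\langle\{g_1,\dots,g_k,n\}\rangle$ and $P$ is a subgee of cardinality $r$ dominated by $\{g_1,\dots,g_k\}$, I would apply \Cref{constant} with $J=P$ and the same $a_i=g_{k+1-i}-g_{k-i}$ to evaluate the right-hand side as $\sum_B\prod_{i=1}^k\binom{a_i+b_i-2}{b_i}$, with $B$ ranging over the stated index set. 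Combining the two displays yields the claimed identity for $\phi=\phi_G$.

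The only point requiring genuine care — and the main, if minor, obstacle — is the verification that $\mathrm{supp}(P)\neq\emptyset$, which is needed to invoke the Proposition. Since $|P|=r<k\leq g_k$, the blocks $(g_{j-1},g_j]$ cannot all be saturated by $P$; let $j$ be the first block not completely filled. Adjoining any unused index $s\in(g_{j-1},g_j]\setminus P$ replaces $\theta(P)$ by $\theta(P)+e_j$, and the inequality $g_{j-1}\geq j-1$ (valid because the $g_i$ are distinct positive integers) provides exactly enough slack in the defining inequalities $\sum_{u=0}^{i-1}b_{k-u}\leq i$ of $S_k$ that $\theta(P)+e_j\in S_k$. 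Hence $P\cup\{s\}$ is again a subgee and $s\in\mathrm{supp}(P)$. Granting this combinatorial fact, everything else in the argument is a formal substitution of the hypotheses on $P$ into the Proposition and into \Cref{constant}.
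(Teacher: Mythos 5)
Your proof is correct and takes essentially the same route as the paper, which deduces the corollary immediately by combining the preceding (unlabeled) proposition with Davis' formula (\Cref{constant}). Your verification that $\mathrm{supp}(P)\neq\emptyset$ (via the first unsaturated block and $g_{j}\geq j$) is a hypothesis the paper leaves implicit, so spelling it out only adds rigor.
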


We now have all the necessary machinery for \Cref{powerR}. So we next see its proof.

\begin{proof}[Proof of \Cref{powerR}]
Without loss of generality, we omit the powers of $R$ just to simplify the notations.  
Comparing $E_{\{g_k\}}$ with $E_{\{b\}}$ we get.
\begin{equation}\label{eqA}
V_b=V_{g_k}+\sum_{\substack{|S|\geq 2\\ g_k\in S}}V_S.    
\end{equation}
Let $P=\{g_k\}$ and $s=g_{k-1}$. 
Then \Cref{implemma} gives \[V_{g_k}=\sum_{\substack{|S|\geq 2\\ g_k\in S,~g_{k-1}\notin S}} V_S.\]
Comparing it with \Cref{eqA}, we get 
\begin{equation}\label{eqB}
V_b=V_{\{g_k,g_{k-1}\}}+ \sum_{\substack{|S|\geq 3\\ \{g_k,g_{k-1}\}\subseteq S}}V_S.
\end{equation} 
Recall that $a_j=g_{k+1-j}-g_{k-j}$ for $1\leq j \leq k$. 
To get a more explicit expression for the monomials $V_S$, in the above equation, we note the following. 
Any subgee $S$ of cardinality at least $3$ and containing $\{g_k, g_{k-1}\}$ corresponds to a $k$-typle $C:= (c_1,\dots,c_k)$ such that $C + (1,1,0,\dots,0)$ belongs to $S_k$. 
In order to expand each $V_S$ as a product of generating classes we note that exactly $c_j$ members can be chosen from the block $(g_{k-j}, g_{k-j+1}]$ and then use \Cref{lemmasubgee} to get the following expression.

\begin{equation*}
V_b=V_{\{g_k,g_{k-1}\}} + \sum_{i=1}^{k-2}\left(  \sum_{\substack{ C+(1,1,0\dots,0)\in S_k\\ C_{+}=i}}\left(\prod_{j=3}^{k}\binom{a_j}{c_j} V_{g_k}V_{g_{k-1}}V_{g_{k-j+1}}V_{g_{k-j+1}-1}\dots \right) \right). 
\end{equation*}
Allowing $C_+=0$ enables us to rewrite the above equation  as
\begin{equation}
V_b=\sum_{i=0}^{k-2}\left(  \sum_{\substack{C+(1,1,0\dots,0)\in S_k\\ C_{+}=i}}\left(\prod_{j=3}^{k}\binom{a_j}{c_j} V_{g_k}V_{g_{k-1}}V_{g_{k-j+1}}V_{g_{k-j+1}-1}\dots \right) \right).     
\end{equation}
From 
\Cref{corollary:constant} we get, 
\begin{equation*}
    \begin{split}
        V_b & =\sum_{i=0}^{k-2}\left(  \sum_{\substack{C+(1,1,0\dots,0)\in S_k\\ C_{+}=i}}\left(\prod_{j=3}^{k}\binom{a_j}{c_j}\sum_{\substack{B+C+(1,1,0\dots,0)\in S_k\\ B_{+}=k-2-i}}~\prod_{q=1}^{k}\binom{a_q+b_q-2}{b_q} \right) \right)\\
        & = \sum_{C+(1,1,0\dots,0)\in S_k}\left(\prod_{j=3}^{k}\binom{a_j}{c_j}\sum_{\substack{B+C+(1,1,0\dots,0)\in S_k\\ B_{+}=k-2-C_{+}}}~\prod_{q=1}^{k}\binom{a_q+b_q-2}{b_q} \right).
    \end{split}
\end{equation*}

For $P=\{1\}$, the relation $(R3)$ of \Cref{hausmann-knutson} gives 
\begin{equation*}
R^m=\sum_{\substack{|S|\geq 1\\ 1\notin S}}V_S=\left(\sum_{\substack{ |S|\geq 1\\ \max(S)\leq g_k,~ 1\notin S}}V_S\right)+\sum_{j\in (g_k, b]}V_j.
\end{equation*}
If $j_1, j_2\in(g_k, b]$ such that $j_1\neq j_2$ then using \Cref{EqR3} and adding equations $E_{\{j_1\}}, E_{\{j_2\}}$ we get that $V_{\{j_1\}} + V_{\{j_2\}} =0$.
Hence all $V_j$'s are equal when $j\in(g_k, b]$. 
\begin{equation}\label{eqD}
R^m=\sum_{\substack{|S|\geq 1\\ 1\notin S}}V_S=\left(\sum_{\substack{ |S|\geq 1\\ \max(S)\leq g_k,~ 1\notin S}}V_S\right)+(b-g_k) V_b.
\end{equation}
Note that $C+(1,1,0,\dots,0)\in S_k$ implies $C=(0,0,c_3,\dots,c_k)$ and $\binom{a_1}{0}\binom{a_2}{0}=1$. Therefore, we can rewrite $V_b$ as 
\[V_b=\sum_{C+(1,1,0\dots,0)\in S_k}\left(\prod_{j=1}^{k}\binom{a_j}{c_j}\sum_{ \substack{B+C+(1,1,0\dots,0)\in S_k\\ B_{+}=k-2-C_{+}} }~\prod_{q=1}^{k}\binom{a_q+b_q-2}{b_q} \right).\]
Let $T=B+C$. Then it is easy to see that \[V_b=\sum_{\substack{T+(1,1,0\dots,0)\in S_k\\ |T|=k-2}}\left(\sum_{B\leq T}\prod_{j=1}^{k}\binom{a_j}{t_j-b_j}\binom{a_j+b_j-2}{b_j}\right).\]
Let $\Tilde{S}_{k-2}=\{0\}\times \{0\}\times S_{k-2}$. Now using binomial identity $\binom{a_j+b_j-2}{b_j}\equiv \binom{1-a_j}{b_j}$ we get, 
\[V_b=\sum_{\substack{T\in \Tilde{S}_{k-2}\\ |T|=k-2 }}\left(\prod_{j=1}^{k}\sum_{b_j}\binom{a_j}{t_j-b_j}\binom{1-a_j}{b_j}\right).\]
Now the Vandermonde identity $\binom{m+n}{r}=\sum_{k=0}^{r}\binom{m}{k}\binom{n}{r-k}$ gives 
\[V_b=\sum_{\substack{T\in \Tilde{S}_{k-2}\\ |T|=k-2}}\prod_{j=1}^{k}\binom{1}{t_j}.\]
Note that $(0,0,1,\dots,1)$ is the only possibile choice for $T$ in the above equation. 
Therefore, $V_b=1$.

Hence, using \Cref{corollary:constant} and \Cref{eqD} we get that \[R^m=\sum_{\substack{B\in S_k\\B_{+}=k}}\prod_{i=1}^{k}\binom{a_i+b_i-2}{b_i}+ (b-g_k).\] This completes the proof of \Cref{powerR}.
\end{proof}

The following is a straightforward corollary of the above result.

\begin{corollary}
Assuming the notations introduced in the proof of the previous result, and let $\langle \{g_1,\dots,g_k,n\}, \{b,n\}\rangle$ be the genetic code of $\alpha$. If 
$$\sum_{\substack{B\in S_k\\B_{+}=k}}\prod_{i=1}^{k}\binom{a_i+b_i-2}{b_i}+ (b-g_k) \equiv 0 \ \mathrm{(mod \  2)},$$ then $\malpha$ is tidy.
\end{corollary}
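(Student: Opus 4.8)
The plan is to translate the arithmetic hypothesis into the vanishing of the top power of the height class, and then pinch all three invariants of \Cref{coind} together. The key preliminary observation is that the generator $R$ of \Cref{hausmann-knutson} is exactly the first Stiefel-Whitney class $w_1(\malpha)$ of the double cover $\malpha \to \mbalpha$; this is the identification already invoked in the proof of \Cref{gcan}. Writing $m = n-3 = \dim(\malpha)$ and recalling that $\phi\colon H^{m}(\mbalpha;\Z_2)\to \Z_2$ is an isomorphism, the class $R^{m}$ vanishes if and only if $\phi(R^m)\equiv 0 \pmod 2$. By \Cref{powerR}, $\phi(R^m)$ is precisely the sum appearing in the statement, so the first step is merely to record that the hypothesis is equivalent to $R^{m}=0$ in $H^m(\mbalpha;\Z_2)$. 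Since $\hts(\malpha)=\sup\{j : R^j\neq 0\}$, this immediately gives $\hts(\malpha)\le m-1$.

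Next I would produce a matching lower bound on the coindex from the shape of the genetic code. The gees of $\langle \{g_1,\dots,g_k,n\},\{b,n\}\rangle$ are $\{g_1,\dots,g_k\}$ and $\{b\}$, so the smallest gee has size $l=1$. Applying \Cref{lem:lower bound on coind} yields $\co(\malpha)\ge n-3-1 = m-1$. Feeding this into the basic chain \eqref{eq: coind ht ind}, in particular $\co(\malpha)\le \hts(\malpha)$, gives
\[
m-1 \le \co(\malpha)\le \hts(\malpha)\le m-1,
\]
and hence $\co(\malpha)=\hts(\malpha)=m-1$.

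Finally I would pin down the index. As $\hts(\malpha)=m-1<m$, \Cref{bueqcor} forces $\ind(\malpha)<m$, i.e.\ $\ind(\malpha)\le m-1$; conversely $\hts(\malpha)\le \ind(\malpha)$ from \eqref{eq: coind ht ind} gives $\ind(\malpha)\ge m-1$. Thus $\ind(\malpha)=m-1$, all three parameters agree, and $\malpha$ is tidy by definition.

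The argument is essentially bookkeeping once the two inputs (the vanishing $R^m=0$ and the coindex bound) are secured, so I do not anticipate any genuine computational obstacle. The only place where real care is needed is the identification of the smallest gee: it is crucial that the gene $\{b,n\}$ contributes a gee $\{b\}$ of size exactly $1$, since this is what raises $\co(\malpha)$ all the way to $m-1$ and lets it meet the upper bound on the height coming from $R^m=0$. I would therefore state explicitly why $R=w_1(\malpha)$, citing the double-cover interpretation, so that the passage from the arithmetic congruence of \Cref{powerR} to the vanishing of the top power of the height class is unambiguous.
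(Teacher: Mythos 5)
Your proof is correct and follows essentially the same route as the paper's: both deduce $R^{n-3}=0$ from \Cref{powerR}, get $\co(\malpha)\ge n-4$ from \Cref{lem:lower bound on coind} via the size-one gee $\{b\}$, and then pull the index down to $n-4$ using the fact that the height is full if and only if the index is full (you cite the paper's \Cref{bueqcor}, the paper cites the equivalent result from the Borsuk--Ulam reference directly).
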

\begin{proof}
By \Cref{powerR}, we have that 
\[R^m=\sum_{\substack{B_{+}=k\\ B\in S_k}}\prod_{i=1}^{k}\binom{a_i+b_i-2}{b_i}+ (b-g_k) = 0.\]
Since the size of the smallest gee in $\langle \{g_1,\dots,g_k,n\}, \{b,n\}\rangle$ is $1$,  \Cref{lem:lower bound on coind} implies that \[n-4\leq \co(\malpha)\leq \hts(\malpha)\leq \ind(\malpha) \leq n-3.\] Using \cite[Proposition 2.2]{BUlowerdim}, 
the height of the Stiefel-Whitney class is full if and only if the index is full. Here $R^m =0$, thus $\ind(\malpha)$ is not full. In particular, $\ind(\malpha) \le n-4.$ This proves the result.
\end{proof}

For an illustration of \Cref{powerR}, we give an example where we compute the height of the Stiefel-Whitney class.
\begin{example}
\normalfont
Suppose that the genetic code is $\langle\{2,4,n\},\{b,n\}\rangle$ and $b\geq 5$. Note that the collection of subgees is 
\begin{equation}\label{swheightexample}
  \bigg\{\emptyset, \{1\}, \{2\}, \dots, \{b-1\}, \{b\}, \{1,2\}, \{1,3\}, \{1,4\}, \{2,3\}, \{2,4\}\bigg\}.  
\end{equation}

For the subgee $\{1,2\}$, the relation (R3) of \Cref{hausmann-knutson} gives 
\[R^m=\sum_{i=3}^{b}R^{m-1}V_i.\]
Using Proposition \ref{RViRVj}  and \Cref{swheightexample}, we get
\begin{equation}\label{swhtexample2}
R^{m}= (b-2)R^{m-1}V_b=b-2.   
\end{equation}
We now compute the value of $R^{m}$ using \Cref{powerR}. Note that only possible values of $B\in S_2$ are $(1,1)$ and $(2,0)$. 
Therefore, \Cref{powerR} gives \[R^{m}=\binom{1}{1}\binom{1}{1}+\binom{0}{0}\binom{2}{2}+b-4=b-2,\] as desired.
\end{example}

\section{The case of quasi-equilateral polygon spaces}\label{qepolys}

The planar polygon space $\malpha$ associated with the length vector $\alpha=(1,\dots,1,r)$ is called a \emph{quasi-equilateral} planar polygon space.
Suppose $r$ is a natural number. Then it is easy to see that $\alpha$ is generic if both $r$ and $n$ have the same parity. 
Moreover, if $r>n-1$ then $\malpha=\emptyset$. 
We make the following observations. 
\begin{enumerate}
    \item If $r=n-2$, then the genetic code of $\alpha$ is $\langle \{n\}\rangle$. It follows from \cite[Example 2.6]{geohausmann} $\malpha \cong S^{n-3}$ and $\mbalpha \cong \mathbb{R}P^{n-3}$. 
    \item If $r=n-4$, then the genetic code of $\alpha$ is $\langle \{n-1,n\}\rangle$. Then \cite[Example 2.12]{geohausmann} gives that $\malpha\cong \sharp_{n-1} (S^1\times S^{n-4})$ and $\mbalpha\cong \sharp_{n}\mathbb{R}P^{n-3}$. Here $\sharp_n X$ represents the connected sum of $n$ copies of a space $X$.
\end{enumerate}  

From \Cref{hausmann-knutson}, the cohomology ring  
 $H^{\ast}(\mbalpha; \Z_2)$ for $\alpha = (1,\dots, 1, r)$ is generated by classes $R, V_1,$ $ V_2, \dots, V_{n-1} \in H^1(\mbalpha; \Z_2)$ subject to the following relations:
\begin{enumerate}
\item[(R1)] $R V_i$ + $V_i^2=0$, for $i \in [n-1]$,
\item[(R2)] $V_S=0$ if $|S|\geq \frac{n-r}{2}$,
\item[(R3)] For $L\subseteq [n-1]$ such that $|L|\geq \frac{n+r}{2}$, \[\displaystyle\sum_{S\subseteq L} R^{|L-S|-1}V_S=0.\]
\end{enumerate}

Recall that the class $R$ coincides with the first Stiefel-Whitney class of the double cover $\malpha\to \mbalpha$.
Kamiyama \cite{KamiyamaSWheight} computed the height of $R$ in terms of the values of $n$ and $r$.
Before stating this result, we recall the following notations from \cite{KamiyamaSWheight}.

\textbf{Notations:}
\begin{enumerate}
\item $D(n):=n-2$, $e(n,r):=\frac{n-r}{2}-1$. Note that $e(n,r)$ is the largest size of the gee (here it is smallest as well since the genetic code has a single gene). 
\item $k(n,r) :=\max\bigg\{i : 0\leq i\leq e(n,r)-1, ~ \binom{D(n)-e(n,r)+i}{i}\equiv 1(\text{mod }2) \bigg\}$.
\item \[\phi(n,r) := \begin{cases}
n-3, & \text{ if } \binom{D(n)}{e(n,r)}\equiv 1(\text{mod }2) , \\[10pt]
\frac{n+r}{2}+k(n,r)-2, & \text{ if } \binom{D(n)}{e(n,r)}\equiv 0(\text{mod }2).
\end{cases}\]
\end{enumerate} 

\begin{theorem}[{\cite[Theorem A]{KamiyamaSWheight}}\label{KamiyamaSWHt}]
Let $h(n,r)$ be the height of the class $R$. 
Then, for all $n\geq 4$ and $r\in \mathbb{N}$ such that $r\leq n-2$ with the same parity as $n$, $h(n,r)=\phi(n,r)$.
\end{theorem}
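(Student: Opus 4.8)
The plan is to compute the height of $R$ directly from the Hausmann--Knutson presentation via Poincar\'e duality, and then to read off the two branches of $\phi(n,r)$ from a single binomial coefficient. Write $m=n-3=\dim\mbalpha$ and let $\phi\colon H^m(\mbalpha;\Z_2)\to\Z_2$ be the Poincar\'e-duality isomorphism. Since $R^{h+1}=R\cdot R^h$, once $R^h$ vanishes so do all higher powers, so $h(n,r)$ is simply the largest $h$ with $R^h\neq 0$. The first step is the reduction: $R^h\neq 0$ if and only if there is a subgee $S$ with $|S|\le m-h$ and $\phi(R^{m-|S|}V_S)=1$. Indeed, if $R^h\neq 0$ then by duality some $y\in H^{m-h}$ satisfies $\phi(R^h y)=1$; expanding $y$ in terms of the monomials $R^{m-h-|S|}V_S$ produces such an $S$, and conversely $z=R^{m-h-|S|}V_S$ witnesses $R^h\neq 0$. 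Consequently
\[ h(n,r) = m - \min\{\, |S| : S \text{ a subgee},\ \phi(R^{m-|S|}V_S) = 1 \,\}. \]

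The second step evaluates $\phi(R^{m-|S|}V_S)$. For $\alpha=(1,\dots,1,r)$ the code is monogenic and the gee is the single interval $\{n-e,\dots,n-1\}$ of size $k=e$, so \Cref{constant} applies with consecutive $g_i$. Hence all but one of the gaps $a_i$ equal $1$, the exceptional one being the size $\tfrac{n+r}{2}+1$ of the initial block $(0,g_1]$. In Davis's sum $\sum_B\prod_i\binom{a_i+b_i-2}{b_i}$ every factor with $a_i=1$ forces $b_i=0$ (as $\binom{b_i-1}{b_i}=0$ for $b_i\ge 1$); thus the unique surviving term places all its weight on the large block, with $b_{\mathrm{big}}=k-|S|$. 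The admissibility $B+\theta(S)\in S_k$ is then automatic for every subgee of a given size: since $B$ is supported on the first coordinate, the right-hand partial sums of $B+\theta(S)$ coincide with those of $\theta(S)$ except in top degree, where they equal $k$. Therefore, writing $i=e-|S|$,
\[ \phi\big(R^{m-|S|}V_S\big) \equiv \binom{D(n)-e+i}{i}\pmod 2 \]
for \emph{every} subgee $S$ of size $|S|$, independently of the choice of $S$. (The block-symmetry statements \Cref{StPs}, \Cref{lemmasubgee} and \Cref{RViRVj} corroborate this independence, and the maximal case $i=0$ recovers \Cref{thm:valueofmaxunderPD}.)

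The final step is bookkeeping. Combining the two reductions, $R^h\neq 0$ exactly when there is some $i=e-|S|\ge 0$ with $\binom{D(n)-e+i}{i}$ odd and $h\le m-e+i$; hence $h(n,r)=m-e+i_{\max}$, where $i_{\max}=\max\{\,0\le i\le e:\binom{D(n)-e+i}{i}\equiv 1\,\}$ (well defined since $i=0$ always qualifies). If $\binom{D(n)}{e}$ is odd then $i_{\max}=e$ and $h(n,r)=m=n-3$. If $\binom{D(n)}{e}$ is even then $i_{\max}=k(n,r)$, and substituting $e=\tfrac{n-r}{2}-1$ gives $h(n,r)=m-e+k(n,r)=\tfrac{n+r}{2}+k(n,r)-2$. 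These are precisely the two cases in the definition of $\phi(n,r)$.

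I expect the main obstacle to be the uniform evaluation in the second step: establishing that Davis's multi-term sum collapses to the single coefficient $\binom{D(n)-e+i}{i}$ for every subgee of a fixed size, and in particular that the surviving weight vector stays inside $S_k$. Everything else (the duality reduction and the final case split) is formal once this identity is in hand; the degenerate case $e=0$, where $\mbalpha\cong\R P^{n-3}$ and $\binom{D(n)}{0}=1$ forces $h=n-3$, can be disposed of as a trivial base case.
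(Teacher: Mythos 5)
Your proposal is correct, but it takes a genuinely different route from the paper: the paper never proves this statement at all --- it is imported verbatim as Kamiyama's Theorem A --- whereas you derive it from Davis's formula (\Cref{constant}), which the paper itself only exploits later, in \Cref{swheight}, for two-gene codes. Your derivation holds up: for $\alpha=(1,\dots,1,r)$ the code is monogenic with gee the interval $\{n-e,\dots,n-1\}$, $e=\tfrac{n-r}{2}-1$, so all blocks have size $1$ except the initial block of size $\tfrac{n+r}{2}+1$; each unit block contributes a factor $\binom{b_i-1}{b_i}$, which forces $b_i=0$, so Davis's sum collapses to the single coefficient
\[
\phi\bigl(R^{m-|S|}V_S\bigr)\;=\;\binom{n-2-|S|}{\,e-|S|\,}\;=\;\binom{D(n)-e+i}{i},\qquad i=e-|S|,
\]
independent of which subgee $S$ of that size is chosen; combined with your Poincar\'e-duality reduction and the definition of $k(n,r)$ this reproduces both branches of $\phi(n,r)$. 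I spot-checked this against known cases (e.g.\ $r=2$ with $n=8,10$, and the $\langle\{b,n\}\rangle$ computation in \Cref{gcan}) and it agrees. What your route buys is uniformity: Kamiyama's original argument manipulates the quasi-equilateral presentation (R1)--(R3) directly, while yours makes his theorem a short corollary of the same machinery the paper already quotes, and it is not circular, since Davis's formula is proved from the Hausmann--Knutson presentation independently of Kamiyama's result.

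One point you must make explicit, because it is exactly where your ``automatic admissibility'' claim lives: as printed in \Cref{constant}, the gaps $a_i=g_{k+1-i}-g_{k-i}$ are indexed from the top block down, while $S_k$ and $\theta$ index blocks from the bottom up. Read literally, the surviving weight $k-|S|$ would sit in the \emph{last} coordinate, and the constraint $\sum_{j=0}^{i-1}(b+\theta)_{k-j}\le i$ already fails at $i=1$ (one needs $b_k+\theta_k\le 1$) whenever $|S|\le k-2$; the sum would then vanish identically and the formula would be false. Your argument --- ``$B$ is supported on the first coordinate, so the right-hand partial sums of $B+\theta(S)$ agree with those of $\theta(S)$ except in top degree'' --- silently adopts the consistent bottom-up pairing, in which the large initial block is the first coordinate. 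That is the correct reading of Davis's theorem, but since the paper's transcription of it is internally inconsistent, your proof should state the convention it uses.
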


We use \Cref{KamiyamaSWHt} to establish bounds on the index and the coindex of the corresponding $\malpha$'s by computing their $h(n,r)$. 
For that, we first give an algorithm to effectively and inductively compute the Stiefel-Whitney height of the corresponding quasi-equilateral planar polygon spaces.

Let $e(n,r) = k$ and that $2^t \le k < 2^{t+1}$ for some natural number $t$. For $1\le j \le k+1$,  denote the sets $\{n-2 :  \binom{n-2-(j-1)}{k-(j-1)}\equiv 1 \text{ (mod $2$)}\}$ (mod $2^{t+1}$) and $\{n-2 :  \phi(n,r)= n-3+(j-1)\}$ (mod $2^{t+1}$) by $A_j$ and $B_j$ respectively. For simplicity of notations, let $C_j=\sqcup_{i=1}^{j} B_i$ for any $1\le j\le k+1$. Our aim now is to inductively determine the set $B_j$ for $1\leq j\leq k+1$, starting with $B_1$. The following are some easy observations
\begin{itemize}
    \item[$(i)$] $A_1=B_1=C_1$.
    \item[$(ii)$] $B_j= A_j\setminus C_{j-1}$.
\end{itemize}

We first explain the method for computing $A_1$ and thereafter we  give arguments for computing $A_j$ inductively, which will be enough to compute $B_j$ and $C_j$ (from previous observations). Let $(k)_2 = k_t k_{t-1} \dots k_1 k_0$ be the binary representation of the number $k$, {\it i.e.}, $$k = 2^t k_t + 2^{t-1} k_{t-1} + \dots + 2k_1 + k_0.$$
Clearly, $k_0, \dots, k_t \in \{0,1\}.$ Let $I_k$ be the indexing set corresponding to $k$ that collects the location of non-zero $k_i$'s in an increasing order, {\it i.e.}, $I_k = \{i_1, \dots, i_s\} \subseteq [t] \cup \{0\}$ with $i_1 < i_2 < \dots < i_s = t$, so that $k_l = 1$ if and only if $l \in I_k$.

We notice that the height is full, {\it i.e.}, $\phi(n,r) = n-3$, if and only if $\binom{n-2}{k} \equiv 1 \ (\text{mod 2})$.
Using Lucas theorem, it suffices to know that in binary representations, each position in $(n-2)_2$ is at least as much as $(k)_2$, position-wise, {\it i.e.}, 
\begin{equation}\label{eq:lucas n-2}
    \alpha_l \ge k_l \text{ for all } 0 \le l \le t, 
\end{equation}
 where $\alpha_l$ is the coefficient of $2^l$ in the binary representation of  $n-2$.
Since $|I_k| = s$, there are $2^{t-s+1}$ choices for $n-2$ to satisfy \Cref{eq:lucas n-2} modulo $2^{t+1}$. These are exactly the places where $k_i$'s are zero. Let these values of $n-2$ be $\{n_1, \dots, n_{t-s+1}\} \text{ modulo }  2^{t+1}$, {\it i.e.}, $n-2 \equiv p$ for some $p \in \{n_1, \dots, n_{t-s+1}\} \text{ (mod  $2^{t+1}$)}$. Thus, $B_1=A_1= \{n_1, \dots, n_{t-s+1}\}$.

We now explain the algorithm of computing $A_{j+1}$, for $1 \le j\le k$, from $A_{j}$. Let $I_{k-(j-1)}=\{p_1,\dots,p_d\}$, where $p_d\leq t$. Recall that $I_{k-(j-1)}$ collects the location of $1$'s in the binary representation of $k-(j-1)$ in an increasing order. We do the computations depending upon whether $k-(j-1)$ is even or odd.

\noindent {\bf Case 1:} $k- (j-1)$ is odd.

We see that if $k-(j-1)$ is odd, then $p_1 = 0$. Therefore, $$I_{k-j} = I_{k-j+1} \setminus \{1\} = \{p_2, p_3, \dots, p_d\}.$$ 

Using Lucas theorem, we see that there are a total of $2^{t+1 - (d-1)}$ values that $n-2-j$ can take to satisfy the equation $ \binom{n-2-j}{k-j}\equiv 1 \text{ (mod $2$)}$. It is now easy to see that $\binom{n-2-j}{k-j}\equiv 1 \text{ (mod $2$)}$ if and only if $n-2-j \equiv q \text{ (mod } 2^{t+1})$ where $q\in A_j \cup \{\ell-1: \ell \in A_j\}$. Hence,  
\begin{equation}\label{eq:aj1comp odd}
\begin{split}
    A_{j+1}& = A_j \cup \{q+1 :  q \in A_j\} \text{ (mod } 2^{t+1}), \text{ and}\\
    B_{j+1} & = A_{j+1}\setminus C_{j}= \{q+1:q \in A_j\}\text{ (mod } 2^{t+1}).
\end{split}
\end{equation}

\noindent{\bf Case 2:} $k- (j-1)$ is even.

If $k- (j-1)$ is even, then $p_1>0$. 
Therefore 
$$I_{k-j} = \{0,1, \dots, p_1-1, p_2, \dots, p_d\}.$$

This gives us that the values of $n-2-j$ that satisfy $ \binom{n-2-j}{k-j}\equiv 1 \text{ (mod $2$)}$ are $n-2-j \equiv q$ (mod $2^{t+1}$), for some $q \in \{\ell_1 - 1: \ell_1 \in A_j, ~ \ell_1 \equiv 0 \text{ (mod } 2^{p_1})\}\cup \{\ell_2 \in A_j: \ell_2 \equiv 2^{p_1}-1 \text{ (mod } 2^{p_1})\}$. Therefore,
\begin{equation}\label{eq:aj1comp even}
\begin{split}
    A_{j+1} = & \big{\{}\{\ell_1: \ell_1 \in A_j,~ \ell_1-(j-1) \equiv 0 \text{ (mod } 2^{p_1})\}\\
    & \cup \{\ell_2+1: \ell_2 \in A_j,~ \ell_2-(j-1) \equiv 2^{p_1}-1 \text{ (mod } 2^{p_1})\}\big{\}} \text{ (mod } 2^{t+1}), \text{ and}\\
    B_{j+1} & =  \{\ell_2+1: \ell_2 \in A_j,~ \ell_2-(j-1) \equiv 2^{p_1}-1 \text{ (mod } 2^{p_1})\}\big{\}} \text{ (mod } 2^{t+1}).
\end{split}
\end{equation}

We repeat these steps to complete the analysis up till $\phi(n,r) = 1$. 

For a better understanding of the above algorithm, we provide the following example.
\begin{example}
\normalfont
    Let  $r = n-8$, {\it i.e.}, $e(n,r) = 3 = k$. 
Then $D(n) = n-2$ implying that $t = 1$ as $2^1 \le k=3 < 2^2$. 
Recall that
\begin{equation*}\label{phinn-8}
\phi(n,n-8) = \begin{cases}
n-3, & \text{ if } \binom{n-2}{3}\equiv 1(\text{mod }2) , \\[10pt]
n-6+k(n,n-8), & \text{ if } \binom{n-2}{3}\equiv 0(\text{mod }2).
\end{cases}    
\end{equation*}

From the binary representation $(3)_2  = 11$ of 3, we see that $3_0 = 3_1 = 1$ and $I_3 = \{i_1, i_2\} = \{0,1\}$ and hence $s=2$. 
Using the above algorithm, there are $2^{t-s+1} = 2^{1-2+1} = 1$ choices for $n-2 \text{ (mod $2^2$)}$ that satisfy $\binom{n-2}{3}\equiv 1(\text{mod }2)$. 
Therefore, $\phi(n,n-8) = n-3$, whenever $n-2 \equiv 3 \text{ (mod }2^2)$, {\it i.e.}, $n \equiv 1 \text{ (mod 4)}$. 
As per the notation in the above algorithm, $A_1 = B_1=C_1=\{3\}$.

We now compute when $\phi(n,n-8) = n-4$.
Since $k =3$ is odd, \Cref{eq:aj1comp odd} implies that $A_2 = \{0, 3\}$. So $B_2=\{0\}$, $n-2\equiv 0$ (mod 4). Therefore $\phi(n,n-8) = n-4$ if $n\equiv 2$ (mod 4). Note that $C_2=\{0,3\}$. 

To find the values of $n$ so that $\phi(n,n-8) = n-5$, we use \Cref{eq:aj1comp even} as $k-1 = 2$ is even. 
Here $p_1=1$ and $j=2$ implies that $A_3=\{1,3\}$. Hence, $B_3=\{1\}$. Therefore, $\phi(n,n-8) = n-5$ if $n-2 \equiv 1$ (mod 4) implying that $n \equiv 3$ (mod 4). Now $C_3=\{0,1,3\}$.

Finally, the values of $n$ that corresponds to $\phi(n,n-8) = n-6$ are given through \Cref{eq:aj1comp odd} as $k-2=1$ is odd.
Since $A_3 = \{1,3\}$, $A_4 = \{0,1,2,3\}$ implying that $B_4 = \{2\}$. Thus $n-2 \equiv  \text{ (mod 4)}$, {\it i.e.}, $\phi(n,n-8) = n-6$ whenever $n \equiv 0 \text{ (mod 4)}$. 
\end{example}


{In \Cref{table}, we note computations for the cases $r = n-6$ to $r = n-14$, {\it i.e.}, $e(n,r) = 2$ to $e(n,r) = 6$. 
The entry corresponding to the row labelled $n-i$ and the column labelled $j$ specifies the values that $n$, up to the appropriate modulo, can take for $\phi(n,r) = n-i$ and $e(n,r) = j$. 
For instance, for $e(n,r) = 5, \phi(n,r)$ is $n-5$ whenever $n$ is congruent to 3 module $2^3$. 
The empty cells of the table denotes that such cases do not arise, for example when $e(n,r) = 2$, the lowest value $\phi(n,r)$ can take is $n-5$. }

\begin{table}[H]
\setlength{\extrarowheight}{0.1cm}
\begin{tabular}{|c||c|c|c|c|c|}
\hline
\backslashbox{$\phi(n,r)$}{$e(n,r)$}
&\makebox[3em]{2}&\makebox[3em]{3}&\makebox[3em]{4}
&\makebox[3em]{5}&\makebox[3em]{6}\\\hline\hline

$n-3$ & 0,1   & 1 & 0,1,6,7 & 1,7 & 0,1 \\ \hline
$n-4$ & 2 & 2 & 2 & 0,2 & 2 \\ \hline
$n-5$ & 3 & 3 & 3 & 3 & 3 \\ \hline
$n-6$ & & 0 & 4 & 4 & 4 \\ \hline
$n-7$ & & & 5 & 5 & 5 \\ \hline
$n-8$ & & & & 6 & 6 \\ \hline
$n-9$ & & & & & 7 \\ \hline
\end{tabular}
\caption{Stiefel-Whitney height calculations}\label{table}
\end{table}

We now consider quasi-equilateral polygon spaces for $r=1 $ and $2$. We deal with these cases for specific values of $n$.
\begin{enumerate}
    \item Let $r=1$. Observe that for $\alpha$ to be generic, $n$ must be odd.
    \begin{enumerate}
        \item Consider $n=2^{s+1}-1$.
        
        In this case, the genetic code of $\alpha$ is $\langle\{2^s+1,\dots,2^{s+1}-1\}\rangle$ and the size of the gee is $2^s-2$.
Therefore, using \Cref{lem:lower bound on coind} and \cite[Proposition C]{KamiyamaSWheight} we get the following inequality
\[2^s-2\leq \co(\malpha)\leq \hts(\malpha)=2^s-2.\]
Therefore, $\co(\malpha)= \hts(\malpha)=2^s-2$.

\item Consider $n=2^s+1$.

From \cite[Proposition C]{KamiyamaSWheight}, we have $\hts(\malpha)=2^s-2$.
Here, the genetic code of $\alpha$ is $\langle\{2^{s-2}+2,\dots,2^s+1\}\rangle$ and the size of gee is $2^{s-1}-1$. Therefore, 
\[2^{s-1}-1\leq \co(\malpha)\leq \hts(\malpha)=2^s-2.\]
    \end{enumerate}
    
\item Let  $r=2$. Observe that for $\alpha$ to be generic, $n$ must be even.
\begin{enumerate}
    \item Consider $n=2^{s+1}-2$.
    
    From \cite[Proposition C]{KamiyamaSWheight}, we have $\hts(\malpha)=2^s-2$.
The genetic code of $\alpha$ in this case is $\langle\{2^s+1,\dots,2^{s+1}-2\}\rangle$ and the size of gee is $2^s-3$. Therefore, 
\[2^s-2\leq \co(\malpha)\leq \hts(\malpha)=2^s-2.\]
Therefore, $\co(\malpha)= \hts(\malpha)=2^s-2$.

\item Consider $n=2^s$.

From \cite[Proposition C]{KamiyamaSWheight}, we know that $\hts(\malpha)=2^s-3$. Therefore, $\ind(\malpha)=2^s-3$.
Here, the genetic code of $\alpha$ is $\langle\{2^{s-1}+2,\dots,2^s\}\rangle$. 
Since the size of gee is $2^{s-1}-2$, we have the following inequality
\[2^{s-1}-1\leq \co(\malpha)\leq \hts(\malpha)=2^s-3.\]
\end{enumerate}
\end{enumerate}

\section{Concluding remarks}

In most of the results of this paper, we have found the exact value of the index for various planar polygon spaces. 
However, in some cases, we could only find a lower bound for the index (for instance, {see \Cref{table}} ). It would be interesting to see if in those cases the height is equal to the index.

In \Cref{lem:lower bound on coind}, we showed that $\co(\malpha)\geq n-k-3$ for any generic $n$-length vector $\alpha$ with the smallest gee of size $k$. Based on our computations, we believe that this bound is tight. Therefore, we conjecture the following.

\begin{conjecture}\label{conj}
    Let $\alpha$ be a generic $n$-length vector. If the size of the smallest gee is $k$, then \[\co(\malpha)=n-k-3.\]
\end{conjecture}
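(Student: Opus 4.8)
The plan is to prove the two inequalities separately. The lower bound $\co(\malpha)\ge n-k-3$ is already furnished by \Cref{lem:lower bound on coind}, whose proof equivariantly embeds the sphere $\mathrm{M}_{\alpha(S)}\cong S^{n-k-3}$ obtained by collapsing the smallest gee $S$ of size $k$. Thus the entire content of the conjecture is the reverse inequality $\co(\malpha)\le n-k-3$, which is the assertion that there is \emph{no} $\Z_2$-equivariant map $S^{n-k-2}_a\to\malpha$. I would devote the whole argument to obstructing such a map, with the embedded sphere of \Cref{lem:lower bound on coind} serving as the coindex-realizing class that nothing one dimension higher should extend past.

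First I would record precisely why the cohomological machinery available here cannot, by itself, succeed. A $\Z_2$-map $g\colon S^N_a\to\malpha$ descends to $\bar g\colon \mathbb{R}P^N\to\mbalpha$ for which $\bar g^*R$ is the generator of $H^1(\mathbb{R}P^N;\Z_2)$, so $\bar g^*(R^N)=c^N\neq 0$ forces $R^N\neq 0$. Equivalently, the Fadell--Husseini $\Z_2$-index of the free space $\malpha$ equals $(t^{\hts(\malpha)+1})$, and monotonicity against $S^N_a$ yields nothing more than the inequality $\co\le\hts$ of \Cref{eq: coind ht ind}. Since $\hts(\malpha)$ is frequently strictly larger than $n-k-3$ — already for the monogenic code $\langle\{b,n\}\rangle$ with $b$ even one has $\hts=n-3>n-4=n-k-3$ by \Cref{gcan} — no purely cohomological (equivalently, Fadell--Husseini $\Z_2$-index) argument can close the gap. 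The conjecture is genuinely a statement about the $\Z_2$-homotopy type, not the $\Z_2$-cohomology, of $\malpha$.

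Accordingly I would pursue a homotopy-theoretic obstruction modelled on the proof of \Cref{gcan}. There, for $b$ even, the equality $\co=n-4$ was forced by identifying the universal cover as a wedge of $(n-4)$-spheres (\Cref{unicovm}), deducing that $\pi_{n-3}(\malpha)$ is $2$-torsion, and invoking \Cref{pinz}. The natural general scheme is: (i) determine the homotopy type of the universal cover $U(\malpha)$ through dimension $n-k-2$ from the handlebody/Bass--Serre description used in \Cref{unicovm}; (ii) show that the primary obstruction to a $\Z_2$-map $S^{n-k-2}_a\to\malpha$ — living in equivariant cohomology of $S^{n-k-2}_a$ with coefficients in $\pi_{n-k-3}(\malpha)$ — is non-trivial, the class of the embedded $S^{n-k-3}$ detecting the bottom of this obstruction; and (iii) conclude $\co(\malpha)\le n-k-3$.

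The principal obstacle is twofold, and I expect it to be the crux. First, one needs the homotopy groups of $\malpha$ in the critical dimension, and these are understood only in the sporadic cases treated above (spheres and connected sums of sphere products, \Cref{gcan}); for a general genetic code with $k\ge 3$ no connected-sum or wedge-of-spheres description is available, so even step (i) is open. Second, \Cref{pinz} applies \emph{only} when $\ind(\malpha)=\co(\malpha)$, whereas the whole difficulty of the conjecture lies in the regime $\ind(\malpha)>\co(\malpha)$ — indeed $\hts$, and hence $\ind$, may be full while $\co$ is small. One therefore needs a strengthening of \Cref{pinz} that obstructs the incoming sphere directly from the torsion of $\pi_{n-k-2}(\malpha)$ without presupposing that the index equals the coindex. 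I would expect the resolution to come either from an inductive scheme propagating the coindex bound along a reduction of the length vector through the wall-crossing structure of polygon spaces, or from a direct equivariant-obstruction computation once the relevant homotopy group is pinned down; securing that homotopy statement for arbitrary $k$ is, in my view, the heart of the problem.
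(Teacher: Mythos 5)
You have not produced a proof, and to be fair the paper does not contain one either: the statement you were given is \Cref{conj}, stated in the concluding remarks as an open conjecture. The only evidence the paper offers is the case of genetic codes $\langle\{b,n\}\rangle$ with $n\neq 6$, where the proof of \Cref{gcan} establishes $\co(\malpha)=n-4=n-k-3$ (with $k=1$). Your proposal correctly reproduces the two ingredients that are actually available: the lower bound $\co(\malpha)\ge n-k-3$ from \Cref{lem:lower bound on coind}, and the diagnosis that no cohomological argument can give the upper bound, since $\hts(\malpha)$ (equivalently the relevant Fadell--Husseini-type information) can be strictly larger than $n-k-3$, as \Cref{gcan} already shows for $b$ even. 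Your reading of the paper's own mechanism is also accurate: in \Cref{gcan} the upper bound on the coindex is forced by combining $\ind(\malpha)=m$ with the torsion of $\pi_m(\malpha)$ (via \Cref{unicovm} and \Cref{pinz}), and this only works because the index there is known to be full.

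The genuine gap is that your steps (i)--(iii) are a program, not an argument, and the two obstacles you name are exactly where it stops being one. First, there is no analogue of \Cref{unicovm} for a general genetic code: the identification of $\malpha$ as $\sharp_b(\S^1\times\S^{m-1})$ and the Bass--Serre construction of its universal cover are special to genes of the form $\{b,n\}$, so $\pi_{n-k-2}(\malpha)$ is not determined in the cases the conjecture is actually about (in particular whenever $k\ge 2$, where $\malpha$ is even simply connected for many codes and the universal-cover route says nothing). Second, \Cref{pinz} is logically unusable in the regime of interest: it obstructs $\ind=\co=n$ simultaneously, whereas the conjecture concerns precisely the situation $\co(\malpha)<\hts(\malpha)\le\ind(\malpha)$, so no strengthening of it is supplied and none is known. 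Since you neither prove the needed homotopy computation nor exhibit the equivariant obstruction of step (ii) in any new case, the upper bound $\co(\malpha)\le n-k-3$ remains unproven --- which is consistent with the paper, where it remains a conjecture, but means your text should be read as a correct assessment of the problem's status rather than as a proof.
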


In the proof of \Cref{gcan}, we showed that the \Cref{conj} is true for $\alpha=\{b,n\}$ if $n\neq 6$.

\section*{Acknowledgements} 
The authors would like to thank the anonymous referee for providing many insightful comments and suggestions that helped improve the quality and presentation of this article. In particular, the authors are thankful to the referee for suggestions to improve the proof of \cref{gcan},  pointing out a gap in the earlier proof of  \Cref{powerR} and for suggesting the use of Lucas theorem that simplified the calculations of \Cref{qepolys} significantly. 

\bibliographystyle{plain} 
\bibliography{main}

\begin{thebibliography}{10}

\bibitem{csorba}
P{\'e}ter Csorba.
\newblock {\em Non-tidy spaces and graph colorings}.
\newblock PhD thesis, ETH Zurich, 2005.

\bibitem{mfdav}
Donald~M. Davis.
\newblock Manifold properties of planar polygon spaces.
\newblock {\em Topology Appl.}, 250:27--36, 2018.

\bibitem{Davisformula}
Donald~M. {Davis}.
\newblock {On the cohomology classes of planar polygon spaces}.
\newblock In {\em Contemp math}, pages 85--89. American Mathematical Society,
  2018.

\bibitem{cohomologyclasses}
Donald~M. Davis.
\newblock On the cohomology classes of planar polygon spaces.
\newblock In {\em Topological complexity and related topics}, volume 702 of
  {\em Contemp. Math.}, pages 85--89. Amer. Math. Soc., [Providence], RI,
  [2018] \textcopyright 2018.

\bibitem{zbMATH05315240}
Michael Farber.
\newblock {\em Invitation to topological robotics}.
\newblock Zurich Lectures in Advanced Mathematics. European Mathematical
  Society (EMS), Z\"{u}rich, 2008.

\bibitem{gadgil}
Siddhartha Gadgil.
\newblock {Embedded spheres in {\{}{\{}$\backslash$(S{\^{}}2 $\backslash$times
  S{\^{}}1{\#} $\backslash$cdots {\#} S{\^{}}2 $\backslash$times
  S{\^{}}1$\backslash$){\}}{\}}}.
\newblock {\em Topology Appl.}, 153(7):1141--1151, 2006.

\bibitem{BUlowerdim}
Daciberg~L. Gon\c{c}alves, Claude Hayat, and Peter Zvengrowski.
\newblock The {B}orsuk-{U}lam theorem for manifolds, with applications to
  dimensions two and three.
\newblock In {\em Group actions and homogeneous spaces}, pages 9--28. Fak. Mat.
  Fyziky Inform. Univ. Komensk\'{e}ho, Bratislava, 2010.

\bibitem{cohomologyring}
J.-C. Hausmann and A.~Knutson.
\newblock The cohomology ring of polygon spaces.
\newblock {\em Ann. Inst. Fourier (Grenoble)}, 48(1):281--321, 1998.

\bibitem{geohausmann}
Jean-Claude Hausmann.
\newblock Geometric descriptions of polygon and chain spaces.
\newblock In {\em Topology and robotics}, volume 438 of {\em Contemp. Math.},
  pages 47--57. Amer. Math. Soc., Providence, RI, 2007.

\bibitem{HR1}
Jean-Claude Hausmann and Eugenio Rodriguez.
\newblock The space of clouds in {E}uclidean space.
\newblock {\em Experiment. Math.}, 13(1):31--47, 2004.

\bibitem{kamiyama10}
Yasuhiko Kamiyama.
\newblock {Homology of the universal covering of planar polygon spaces}.
\newblock {\em JP Journal of Geometry and Topology}, 10(2):171--181, 2010.

\bibitem{KamiyamaSWheight}
Yasuhiko Kamiyama and Kazufumi Kimoto.
\newblock The height of a class in the cohomology ring of polygon spaces.
\newblock {\em Int. J. Math. Math. Sci.}, pages Art. ID 305926, 7, 2013.

\bibitem{UBUthm}
Ji\v{r}\'{\i} Matou\v{s}ek.
\newblock {\em Using the {B}orsuk-{U}lam theorem}.
\newblock Universitext. Springer-Verlag, Berlin, 2003.
\newblock Lectures on topological methods in combinatorics and geometry,
  Written in cooperation with Anders Bj\"{o}rner and G\"{u}nter M. Ziegler.

\bibitem{mcc81}
Darryl McCullough.
\newblock {Connected sums of aspherical manifolds}.
\newblock {\em Indiana Univ. Math. J.}, 30:17--28, 1981.

\bibitem{BUmfds}
Oleg~R. Musin.
\newblock Borsuk-{U}lam type theorems for manifolds.
\newblock {\em Proc. Amer. Math. Soc.}, 140(7):2551--2560, 2012.

\bibitem{GP}
Gaiane Panina.
\newblock Moduli space of a planar polygonal linkage: a combinatorial
  description.
\newblock {\em Arnold Math. J.}, 3(3):351--364, 2017.

\bibitem{topgroup}
Peter Scott and Terry Wall.
\newblock Topological methods in group theory.
\newblock Homological group theory, {Proc}. {Symp}., {Durham} 1977, {Lond}.
  {Math}. {Soc}. {Lect}. {Note} {Ser}. 36, 137-203, 1979.

\end{thebibliography}

\end{document}